\documentclass[10pt,reqno]{amsart}
\usepackage{amsmath,amssymb}
\usepackage{longtable,lscape}
\usepackage{tabularx}
\usepackage{array}
\usepackage{layout}
\usepackage[colorlinks=true, urlcolor=black, linkcolor=black, citecolor=black]{hyperref}
\setlength{\textheight}{220mm} \setlength{\textwidth}{160mm}
\setlength{\oddsidemargin}{1.35mm}
\setlength{\evensidemargin}{1.35mm} \setlength{\topmargin}{-5mm}

\pagestyle{myheadings} \markboth{AMS}
{Research Paper}

\setbox0=\hbox{$+$}
\newdimen\plusheight
\plusheight=\ht0
\def\+{\;\lower\plusheight\hbox{$+$}\;}

\setbox0=\hbox{$-$}
\newdimen\minusheight
\minusheight=\ht0
\def\-{\;\lower\minusheight\hbox{$-$}\;}

\setbox0=\hbox{$\cdots$}
\newdimen\cdotsheight
\cdotsheight=\plusheight
\def\cds{\lower\cdotsheight\hbox{$\cdots$}}

\renewcommand{\(}{\left\(}
\renewcommand{\)}{\right\)}

\renewcommand{\pmod}[1]{\,(\textup{mod}\,#1)}
\theoremstyle{plain}
\newtheorem{thm}{Theorem}[section]
\newtheorem{lem}[thm]{Lemma}

\newtheorem{cor}[thm]{Corollary}

\newtheorem{conj}[thm]{Conjecture}
\newtheorem{rem}[thm]{Remark}

\newenvironment{pf}
{\vskip 0.15in \par\noindent{\bf Proof of Theorem}\hskip 0.5em\ignorespaces}
{\hfill $\Box$\par\medskip}

\begin{document}
\title{On some conjectural determinants of Sun involving residues}

\author{Rituparna Chaliha}
\address{Department of Science and Mathematics, Indian Institute of Information Technology Guwahati, Kamrup-781015, Assam, INDIA}
\email{rituparnachaliha5@gmail.com}
\author{Gautam Kalita}
\address{Department of Mathematics, Bhattadev University, Bajali, Pathsala, Assam-781325, INDIA}
\email{haigautam@gmail.com}
\subjclass[2020]{11A07, 11A15, 11C20, 15A15, 15B36, 11R11.}
\date{}
\keywords{Residues, Legendre symbol, Determinants, }

\begin{abstract} 
 For an odd prime $p$ and integers $d, k, m$ with gcd$(p,d)=1$ and $2\leq k\leq \frac{p-1}{2}$, we consider the determinant  
    \begin{equation*}
        S_{m,k}(d,p) = \left|(\alpha_i - \alpha_j)^m\right|_{1 \leq i,j \leq \frac{p-1}{k}},
    \end{equation*}
    where  $\alpha_i$ are distinct $k$-th power residues modulo $p$. In this paper, we deduce some residue properties for the determinant $S_{m,k}(d,p)$ as a generalization of certain results of Sun. Using these, we further prove some conjectures of Sun related to $$\left(\frac{\sqrt{S_{1+\frac{p-1}{2},2}(-1,p)}}{p}\right) \text{ and } \left(\frac{\sqrt{S_{3+\frac{p-1}{2},2}(-1,p)}}{p}\right).$$ In addition, we investigate the number of primes $p$ such that $p\ |\ S_{m+\frac{p-1}{k},k}(-1,p)$, and confirm another conjecture of Sun related to $S_{m+\frac{p-1}{2},2}(-1,p)$.    
\end{abstract}

\maketitle

\section{Introduction and statements of results}
Let $n$ be a positive integer and $R$ a commutative ring with unity. For an $n\times n$ matrix $M=[a_{ij}]_{1\leq i,j\leq n}$ with $a_{ij}\in R$, we denote the determinant by $|M|$ or $|[a_{ij}]_{1\leq i,j\leq n}|$. Let $p$ be an odd prime and $\chi_\ell$ denotes a multiplicative character of order $\ell$ modulo $p$. For example, $\chi_2(\cdot)=\left(\frac{\cdot}{p}\right)$ is the usual Legendre symbol. In this paper, we study some conjectural determinants involving residues. Determinants with Legendre symbol entries were first considered by Lehmer  \cite{Lehmer1}, where he used a general method to determine the characteristic roots of two classes of matrices to evaluate their determinants. Extending a result of Lehmer \cite{Lehmer1}, Carlitz \cite{Carlitz1} obtained the characteristic polynomial of the matrix $$M = \left|\left(\frac{i - j}{p}\right)\right|_{1 \leq i,j \leq p-1},$$ and deduced that $|M| = p^{\frac{p-3}{2}}$. In \cite{Chapman1}, Chapman studied some matrices that appeared in the investigation of lattices constructed from quadratic residue codes and their generalizations, and determined values of 
$$\left|\left(\frac{i+j-1}{p}\right)\right|_{1\leq i,j\leq \frac{p-1}{2}} \text{ and } \left|\left(\frac{i+j-1}{p}\right)\right|_{1\leq i,j\leq \frac{p+1}{2}}$$ using quadratic Gauss sums. Following these, Vsemirnov \cite{Vsemirnov1,Vsemirnov2} used a sophisticated matrix decomposition to confirm a challenging conjecture of Chapman \cite{Chapman2} on the determinant $$\left|\left(\frac{j-i}{p}\right)\right|_{1\leq i,j\leq \frac{p+1}{2}}.$$ 
In \cite{Sun1}, Sun concentrated on determinants of the form 
\begin{equation*}
    \left|\left(\frac{f(i,j)}{p}\right)\right|_{1 \leq i,j \leq \frac{p-1}{2}},
\end{equation*}
where $f(x,y)$ is a quadratic form, and investigated their quadratic residue properties. In particular, for $p\nmid d$, Sun \cite{Sun1} studied the determinant 
\begin{equation*}
    S(d,p) = \left|\left(\frac{i^2+dj^2}{p}\right)\right|_{1 \leq i,j \leq \frac{p-1}{2}},
\end{equation*}
and proved that 
\begin{equation*}
    \left(\frac{S(d,p)}{p}\right) =
        \begin{cases}
            \left(\frac{-1}{p}\right), &\text{if $\left(\frac{d}{p}\right)=1$;}\\
             0,   &\text{if $\left(\frac{d}{p}\right)=-1$.} \\
        \end{cases}
    \end{equation*}
In addition, Sun \cite{Sun1} also posed a number of conjectures related to the determinant $S(d,p)$. In recent years, some of these conjectures and their generalizations have been proved by many mathematicians, such as Krachun et. al. \cite{Krachun1}, Grinberg et. al. \cite{Grinberg1}, Wu \cite{Wu2}, and Wu-Wang \cite{Wu1}. 
\par For $n,k\in\mathbb{N}$ with $2 \leq k \leq \frac{p-1}{2}$, throughout the paper, let $p$ be an odd prime such that $p\equiv 1$ $($mod $k)$ and $$S_{n,k}(d,p) = \left|(\alpha_i +d \alpha_j)^n\right|_{1 \leq i,j  \leq \frac{p-1}{k}},$$ where $\alpha_i$ are distinct $k$-th residues modulo $p$. Note that $S_{\frac{p-1}{2},2}(d,p)\equiv S(d,p) \pmod{p}$. From \cite[Theorem 1.8]{Kalita1}, it is known that for $p>3$,
\begin{equation*}
		\left(\frac{S_{\frac{p-3}{2},2}\left(d,p\right)}{p}\right)=\begin{cases}
		\left(\frac{d}{p}\right)^{\frac{p-1}{4}}, & \text{if}\ p \equiv 1 \pmod{4};\\ \notag
		\left(\frac{d}{p}\right)^{\frac{p-3}{4}}\left(-1\right)^{|\{0<k<\frac{p}{2}:\left(\frac{k}{p}\right)=-1\}|}, & \text{if}\ p \equiv 3 \pmod{4}.
	\end{cases}
	\end{equation*}
Moreover, for $n < \frac{p-3}{2}$
, we have from \cite[Lemma $9$]{Krattenthaler1} that $$S_{n,2}(d,p) = 0.$$
In \cite{Sun2}, Sun considered the determinant $S_{n,2}(d,p)$ for $n>\frac{p-1}{2}$, and proved a number of results related to them under the condition $\left(\frac{d}{p}\right)=-1$. Following these, Wu, She and Wang \cite{Wu3} proved a conjecture of Sun \cite[Conjecture 4.5]{Sun1} related the Legendre symbol $\left(\frac{S_{\frac{p+1}{2},2}\left(d,p\right)}{p}\right)$. Recently, Ren and Sun \cite{Ren1} studied the determinant $S_{n,2}(d,p)$ for $n>\frac{p-1}{2}$ under the condition $\left(\frac{d}{p}\right)=1$. In the following theorems, we generalize some of the results of Ren and Sun \cite{Ren1} to the determinant $S_{n,k}(d,p)$.
\begin{thm}\label{thm1} Let $p$ be an odd prime and  $k\in \mathbb{N}$ with $2\leq k\leq \frac{p-1}{2}$ such that $p>2k+1$ and $p\equiv1$ $($mod $k)$. If $n \in \{\frac{p-1}{k}+1, \frac{p-1}{k}+2, \cdots, \frac{2(p-1)}{k}-1\}$ such that $n \equiv \frac{p-1}{k} \pmod{2}$ and $d \in \mathbb{Z}$ with $\chi_k(d) = -1$, then $$S_{n,k}(d,p) \equiv 0 \pmod{p}.$$
\end{thm}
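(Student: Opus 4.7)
The plan is to expand the entries via the binomial theorem, reduce exponents using the $k$-th power residue structure, and factor the resulting matrix as a triple product with a sparse middle factor. Write $N = (p-1)/k$ and $m = n - N$, so $m \in \{1,\ldots,N-1\}$ and the parity hypothesis $n \equiv N \pmod{2}$ forces $m$ to be even. Since each $\alpha_i$ is a $k$-th power residue modulo $p$, we have $\alpha_i^{N} \equiv 1 \pmod{p}$. Expanding
$$(\alpha_i + d\alpha_j)^n = \sum_{r=0}^{n}\binom{n}{r} d^{r}\alpha_i^{n-r}\alpha_j^{r}$$
and collecting terms by the reduced exponent pair $(a,b) \in \{0,\ldots,N-1\}^{2}$, one obtains
$$(\alpha_i + d\alpha_j)^n \equiv \sum_{\substack{a+b=m\\ 0\leq a,b\leq m}} c_b\,\alpha_i^{a}\alpha_j^{b} + \sum_{\substack{a+b=N+m\\ m+1\leq a,b\leq N-1}} e_b\,\alpha_i^{a}\alpha_j^{b} \pmod{p},$$
where $c_b = \binom{n}{b} d^{b} + \binom{n}{b+N} d^{b+N}$ and $e_b = \binom{n}{b} d^{b}$; the splitting encodes whether each of $n-r$ and $r$ is $\geq N$ or $< N$.

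Next, I would write the matrix of $S_{n,k}(d,p)$ as $M \equiv U F U^{T} \pmod{p}$, where $U$ is the $N \times N$ Vandermonde-type matrix with $(i,a)$-entry $\alpha_i^{a}$ (for $0\leq a\leq N-1$) and $F$ is the $N \times N$ coefficient matrix above. Since the $\alpha_i$ are distinct modulo $p$, $\det U = \pm\prod_{i<j}(\alpha_j-\alpha_i) \not\equiv 0 \pmod{p}$, so $S_{n,k}(d,p) \equiv (\det U)^{2}\det F \pmod{p}$, reducing the problem to showing $\det F \equiv 0 \pmod{p}$. A direct inspection reveals that $F$ is a generalized permutation matrix: row $a \in \{0,\ldots,m\}$ has its unique non-zero entry at column $b=m-a$ with value $c_{m-a}$, while row $a \in \{m+1,\ldots,N-1\}$ has it at column $b=N+m-a$ with value $e_{N+m-a}$. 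Consequently
$$\det F = \pm\, c_0 c_1 \cdots c_m \cdot e_{m+1} \cdots e_{N-1}.$$

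Finally, the hypothesis $\chi_k(d)=-1$ forces $k$ to be even (for odd $k$ the value $-1$ is not in the image of $\chi_k$, so the theorem is vacuous), and under the standard normalisation it amounts to $d^{N} \equiv -1 \pmod{p}$. Combined with the symmetry $\binom{n}{b+N}=\binom{N+m}{m-b}=\binom{n}{m-b}$, this yields
$$c_b \equiv d^{b}\left(\binom{n}{b} - \binom{n}{m-b}\right) \pmod{p}.$$
Since $m$ is even, the choice $b=m/2 \in \{0,\ldots,m\}$ gives $c_{m/2}\equiv 0 \pmod{p}$, whence $\det F \equiv 0 \pmod{p}$ and the theorem follows.

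The principal obstacle will be the combinatorial bookkeeping in the first step: a careful case analysis according to whether $n-r$ and $r$ exceed $N-1$ is needed to verify the two-strip support of $F$ and to check that contributions from $r\leq m$ and $r\geq N$ amalgamate into the single coefficient $c_b$ on the anti-diagonal $a+b=m$. Once the structure of $F$ is in place, the determinant collapses into a product in which the parity hypothesis $n\equiv N\pmod{2}$ is precisely what forces the factor $c_{m/2}$ to vanish.
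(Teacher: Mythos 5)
Your argument is correct and is essentially the paper's: Theorem 1.1 is there deduced in one line from Lemma 2.6, whose proof is exactly your computation --- binomial expansion, reduction of exponents via $\alpha^{(p-1)/k}\equiv 1\pmod p$, a Vandermonde-type factorization, and the observation that the middle coefficient on the anti-diagonal carries the factor $1+d^{(p-1)/k}\equiv 1+\chi_k(d)=0$ precisely when $n\equiv\frac{p-1}{k}\pmod 2$. Your $UFU^{T}$ decomposition with a generalized permutation matrix $F$ is just a repackaging of the paper's step of first factoring out $\alpha_j^{n}$ so that Krattenthaler's single-polynomial determinant lemma applies to $\left(\frac{\alpha_i}{\alpha_j}+d\right)^{n}$.
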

\begin{rem}
 For $k=2$, we obtain \cite[Theorem $1.1$]{Ren1} from Theorem \ref{thm1}.
\end{rem}
\begin{thm}\label{thm2} Let $p$ be an odd prime and $k\in\mathbb{N}$ with $2\leq k\leq \frac{p-1}{2}$ such that $p>2k+1$ and $p\equiv1$ $($mod $2k)$. Suppose $n \in \{\frac{p-1}{k}+1, \frac{p-1}{k}+2, \cdots, \frac{2(p-1)}{k}-1\}$ is odd and $d\in\mathbb{Z}$ with $\chi_k(d)=1$. 
    \begin{itemize}
        \item[$(a)$] If $k$ is even, then $\displaystyle\left(\frac{S_{n,k}(d,p)}{p}\right) \ne -1.$
        \item[$(b)$]  If $k$ is odd, then $\displaystyle\left(\frac{S_{n,k}(d,p)}{p}\right) \ne
        \begin{cases}
          -1,& \text{if}\ p \equiv 1 \pmod{4k}; \\
          \left(\frac{d}{p}\right), & \text{if}\ p \equiv 2k+1 \pmod{4k}. 
        \end{cases}$
    \end{itemize}
\end{thm}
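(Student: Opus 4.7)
The plan is to factor the matrix $M := [(\alpha_i + d\alpha_j)^n]_{1 \le i,j \le N}$, with $N := (p-1)/k$, as a product of three simpler matrices modulo $p$. Since every $k$-th power residue $\alpha$ satisfies $\alpha^N \equiv 1 \pmod{p}$ and the hypothesis $\chi_k(d) = 1$ gives $d^N \equiv 1 \pmod{p}$, expanding $(\alpha_i + d\alpha_j)^n$ by the binomial theorem and grouping terms by $e := \ell \bmod N$ should yield
\begin{equation*}
M \equiv V \cdot \mathrm{diag}(c_0, \ldots, c_{N-1}) \cdot W \pmod{p},
\end{equation*}
where $V_{i,e} = \alpha_i^{(n-e) \bmod N}$, $W_{e,j} = \alpha_j^{e}$, and $c_e = \sum_{\substack{0 \le \ell \le n \\ \ell \equiv e \pmod{N}}} \binom{n}{\ell} d^\ell$. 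The matrix $W$ is a standard Vandermonde matrix, and $V$ differs from $W^{T}$ by the column permutation $\phi \colon e \mapsto (n-e) \bmod N$.

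Since $N$ is even (from $p \equiv 1 \pmod{2k}$) and $n$ is odd, $\phi$ is a fixed-point-free involution, so $\mathrm{sgn}(\phi) = (-1)^{N/2}$. Taking determinants then gives
\begin{equation*}
S_{n,k}(d,p) \equiv (-1)^{N/2} \Bigl(\prod_{1 \le i < j \le N}(\alpha_j - \alpha_i)\Bigr)^{2} \prod_{e=0}^{N-1} c_e \pmod{p}.
\end{equation*}
Writing $n = N + r$ with $r$ odd and invoking $d^N \equiv 1 \pmod{p}$, one obtains $c_e \equiv d^e \wt{c}_e \pmod{p}$, where $\wt{c}_e = \binom{n}{e} + \binom{n}{e+N}$ for $0 \le e \le r$ and $\wt{c}_e = \binom{n}{e}$ for $r < e \le N-1$. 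The identity $\binom{N+r}{e+N} = \binom{N+r}{r-e}$ gives the symmetry $\wt{c}_e = \wt{c}_{r-e}$ on $\{0, \ldots, r\}$, while $\binom{n}{e} = \binom{n}{n-e}$ gives $\wt{c}_e = \wt{c}_{n-e}$ on $\{r+1, \ldots, N-1\}$. Since $r$ and $n$ are odd, both involutions are fixed-point free, so $\prod_{e=0}^{N-1} \wt{c}_e$ is a perfect square in $\mathbb{Z}$.

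Putting everything together, $S_{n,k}(d,p) \equiv (-1)^{N/2} d^{N(N-1)/2} \cdot (\text{square}) \pmod{p}$, so $\left(\frac{S_{n,k}(d,p)}{p}\right) \in \{0, \varepsilon\}$ with $\varepsilon := \left(\frac{(-1)^{N/2} d^{N(N-1)/2}}{p}\right)$. A case analysis of $\varepsilon$ finishes the proof: for $k$ even, $\chi_k(d)=1$ forces $\left(\frac{d}{p}\right) = 1$ and $4 \mid 2k \mid p-1$ gives $p \equiv 1 \pmod{4}$, so $\varepsilon = 1$; for $k$ odd with $p \equiv 1 \pmod{4k}$, both $N/2$ and $N(N-1)/2$ are even and $p \equiv 1 \pmod 4$, again giving $\varepsilon = 1$; for $k$ odd with $p \equiv 2k+1 \pmod{4k}$, $N/2$ is odd with $p \equiv 3 \pmod 4$ and $N(N-1)/2$ is odd, yielding $\varepsilon = -\left(\frac{d}{p}\right)$. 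Thus in each case $\left(\frac{S_{n,k}(d,p)}{p}\right) \ne -\varepsilon$, which is the stated conclusion. The main obstacle is discovering the symmetry $\wt{c}_e = \wt{c}_{r-e}$ that exhibits $\prod_e \wt{c}_e$ as an integer square; this relies crucially on the reduction $d^N \equiv 1 \pmod{p}$ (supplied by $\chi_k(d) = 1$) collapsing $\binom{n}{e} d^e + \binom{n}{e+N} d^{e+N}$ into $d^e \wt{c}_e$, together with the parity input ($n$ odd, $N$ even, forcing $r$ odd) needed to make both pairing involutions act without fixed points.
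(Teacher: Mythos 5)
Your argument is correct and follows essentially the same route as the paper: its Lemma 2.6 performs exactly your reduction (binomial expansion collapsed via $\alpha^{(p-1)/k}\equiv d^{(p-1)/k}\equiv 1 \pmod{p}$, a Vandermonde-type determinant evaluation via Krattenthaler's lemma, and the same two fixed-point-free involutions $e\mapsto r-e$ and $e\mapsto n-e$ pairing the coefficients into a square), after which Theorem 1.2 is the identical computation of the Legendre symbol of the leftover unit $(-1)^{N/2}d^{N/2}$. The only cosmetic difference is that you factor the matrix directly as $VDW$ and track the permutation sign, whereas the paper first normalizes by $\prod_j \alpha_j^{\,n}$ and then invokes Krattenthaler's lemma.
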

\begin{rem}
    For $k=2$, Theorem \ref{thm2} readily provides \cite[Theorem 1.2]{Ren1}.
\end{rem}
In \cite{Sun2}, Sun showed that $$S_{m+\frac{p-1}{2},2}(-1,p)\equiv0 \pmod{p}$$ when $m$ is even and $p\equiv3$ $($mod $4)$, and $S_{m+\frac{p-1}{2},2}(-1,p)$ is an integer square modulo $p$ when $m$ is odd and $p\equiv1$ $($mod $4)$. In addition, Sun \cite{Sun2} also posed a number of conjectures related to the determinant $S_{m+\frac{p-1}{2},2}(-1,p)$.
\begin{conj}\cite[Conjecture $6.3$]{Sun2}\label{conj01}
    For any prime $p \equiv 1 \pmod{4},$ we have 
    \begin{equation*}
        \left(\frac{\sqrt{S_{1+\frac{p-1}{2},2}(-1,p)}}{p}\right) = (-1)^{|\{0 < k < \frac{p}{4}:\left(\frac{k}{p}\right)=-1\}|}\left(\frac{p}{3}\right).
    \end{equation*}
\end{conj}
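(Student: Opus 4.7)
The plan is to reduce $S_{1+\frac{p-1}{2},2}(-1,p)$ modulo $p$ by Fermat's little theorem and then exploit the cyclic multiplicative structure of the quadratic residues to relate the resulting determinant to a product of character sums that can be evaluated explicitly. Since $p\equiv1\pmod 4$, we have $\chi_2(-1)=1$, so Theorem~\ref{thm2}(a) applies with $n=1+\frac{p-1}{2}$ (which is odd in this congruence class) and already forces $\left(\frac{S_{1+\frac{p-1}{2},2}(-1,p)}{p}\right)\in\{0,1\}$. This is consistent with the quantity being a perfect square, but pinning down the Legendre symbol of its square root requires a direct evaluation.

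First I would use that $(\alpha_i-\alpha_j)^{(p-1)/2}\equiv\chi_2(\alpha_i-\alpha_j)\pmod p$ whenever $\alpha_i\ne\alpha_j$, while the diagonal entries vanish, so that
\begin{equation*}
S_{1+\frac{p-1}{2},2}(-1,p)\equiv\det\bigl[(\alpha_i-\alpha_j)\chi_2(\alpha_i-\alpha_j)\bigr]_{1\le i,j\le\frac{p-1}{2}}\pmod p.
\end{equation*}
Fixing a primitive root $g$ modulo $p$, setting $\omega=g^2$, and listing $\alpha_i=\omega^{i-1}$, the $(i,j)$-entry factors as $\omega^{j-1}(\omega^{i-j}-1)\chi_2(\omega^{i-j}-1)$. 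Pulling out the diagonal $\operatorname{diag}(1,\omega,\dots,\omega^{(p-3)/2})$ on the right leaves a circulant matrix whose entries $a_r=(\omega^r-1)\chi_2(\omega^r-1)$ for $r\not\equiv 0$ and $a_0=0$ depend only on $r\bmod\tfrac{p-1}{2}$.

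The circulant diagonalizes over the characters $\psi$ of the cyclic quadratic-residue subgroup, giving
\begin{equation*}
\det[a_{i-j}]=\prod_\psi\widehat a(\psi),\qquad \widehat a(\psi)=\sum_{r=1}^{(p-3)/2}\psi(\omega^r)(\omega^r-1)\chi_2(\omega^r-1).
\end{equation*}
Each $\widehat a(\psi)$ is a character sum supported on the QRs which, after lifting $\psi$ to a character $\chi$ on $\mathbb{F}_p^\times$, rewrites in terms of Jacobi sums $J(\chi,\chi_2)$. Pairing $\psi$ with $\psi^{-1}$ makes the perfect-square structure of Sun's observation manifest, and taking the square root modulo $p$ leaves a distinguished product from which the desired Legendre symbol can, in principle, be extracted using classical evaluations of Gauss and Jacobi sums together with the decomposition $p=a^2+b^2$ with $a$ odd.

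The main obstacle will be matching this product to the precise closed form $(-1)^{|\{0<k<p/4:(k/p)=-1\}|}\bigl(\tfrac p3\bigr)$. The factor $\bigl(\tfrac p3\bigr)$ is expected to arise from the quartic Jacobi sum $J(\chi_4,\chi_4)$ via its link with cubic residuacity at $3$, while the sign $(-1)^{|\{0<k<p/4:(k/p)=-1\}|}$ is a Dirichlet class-number type invariant that should be traced to the absorbed diagonal monomial $\omega^{(p-3)(p-1)/8}$ combined with the sign contributions from the character pairing. A useful sanity check is the companion evaluation of $S_{\frac{p-3}{2},2}(d,p)$ recalled from~\cite{Kalita1}, which already exhibits an analogous count of non-residues, along with the results of Ren--Sun~\cite{Ren1} for neighbouring values of $n$; both should specialize consistently with the final sign bookkeeping needed here.
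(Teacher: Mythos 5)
Your proposal is a plan, not a proof: the decisive step is explicitly deferred ("can, in principle, be extracted", "is expected to arise", "should be traced to"), and that deferred step is exactly where all the difficulty of Conjecture~6.3 lives. Concretely, after you diagonalize the circulant, the factors $\widehat a(\psi)=\sum_{x\in \mathrm{QR}}\psi(x)(x-1)\chi_2(x-1)$ are \emph{not} Jacobi sums, because of the extra non-multiplicative factor $(x-1)$; the natural way to handle it is to write $(x-1)\chi_2(x-1)\equiv (x-1)^{(p+1)/2}\pmod p$, expand by the binomial theorem, and use orthogonality, at which point each $\widehat a(\psi)$ collapses to a single binomial coefficient $\binom{(p+1)/2}{\cdot}$ --- i.e.\ you are led back to precisely the product of binomial coefficients that the paper obtains in one stroke from Krattenthaler's lemma (Lemma~\ref{Krattenthaler1}) inside Lemma~\ref{thm0}. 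Having that product is still not the end: one must evaluate the Legendre symbol of $\prod_l\binom{(p-1)/2+1}{2+l}$ (which telescopes to double factorials, split according to $p\bmod 8$), and of the Vandermonde factor $T(\frac{p-1}{2})$, and none of this appears in your outline.

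Your guesses for the source of the two factors in the closed form are also off target, which matters because they would send you down the wrong computation. In the paper's proof the $\left(\frac{p}{3}\right)$ comes from the elementary congruence $(\frac{p-1}{2}+1)(\frac{p-1}{2}+2)\equiv \frac{3}{4}\pmod p$ followed by quadratic reciprocity (Theorem~\ref{Burton1}); no quartic Jacobi sum $J(\chi_4,\chi_4)$, no cubic residuacity, and no decomposition $p=a^2+b^2$ enters anywhere. The class-number-type sign $(-1)^{|\{0<k<p/4:\,(\frac{k}{p})=-1\}|}$ is not produced by "sign contributions from the character pairing" but is imported wholesale from Sun's identity $\left(\frac{((p-3)/2)!!}{p}\right)=(-1)^{|\{0<k<p/4:\,(\frac{k}{p})=-1\}|}$ (Lemma~\ref{Sun1.1}), combined with $\left(\frac{((p-1)/2)!!}{p}\right)=\left(\frac{2}{p}\right)\left(\frac{((p-3)/2)!!}{p}\right)$ and $\left(\frac{T((p-1)/2)}{p}\right)=\left(\frac{2}{p}\right)$ from Lemmas~\ref{Sun1.2} and~\ref{Sun1.3}. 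Without carrying out the double-factorial bookkeeping and invoking these identities (or reproving them), the claimed equality is not established.
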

\begin{conj}\cite[Conjecture $6.4$]{Sun2}\label{conj02}
    For any prime $p \equiv 1 \pmod{4},$ we have 
    \begin{equation*}
        \left(\frac{\sqrt{S_{3+\frac{p-1}{2},2}(-1,p)}}{p}\right) = (-1)^{|\{0 < k < \frac{p}{4}:\left(\frac{k}{p}\right)=-1\}|}\left(\frac{p}{4+(-1)^{\frac{p-1}{4}}}\right).
    \end{equation*}
\end{conj}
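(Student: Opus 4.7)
The plan is to reduce the determinant using Fermat's little theorem, decompose the resulting matrix as a circulant times a diagonal, and evaluate its eigenvalues via Jacobi sums. Since $p \equiv 1 \pmod 4$, the exponent $n := 3 + (p-1)/2$ is odd, so Theorem~\ref{thm2}(a) immediately yields $\left(\frac{S_{n,2}(-1,p)}{p}\right) \ne -1$; hence the square root modulo $p$ is well defined (up to sign) once non-vanishing is verified. By Fermat's little theorem, for $\alpha_i \ne \alpha_j$, $(\alpha_i - \alpha_j)^{n} \equiv (\alpha_i - \alpha_j)^3\,\chi_2(\alpha_i - \alpha_j) \pmod p$, so
\[ S_{n,2}(-1,p) \equiv \det\bigl[(\alpha_i - \alpha_j)^3\,\chi_2(\alpha_i - \alpha_j)\bigr]_{1 \le i, j \le (p-1)/2} \pmod p. \]

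Next, I would fix a primitive root $g$ modulo $p$ and enumerate $\alpha_i = g^{2i}$ for $0 \le i \le (p-3)/2$. Factoring $\alpha_i - \alpha_j = g^{2j}(g^{2(i-j)} - 1)$ rewrites the $(i,j)$ entry as $g^{6j}\,\beta_{i-j}$, where $\beta_m := (g^{2m}-1)^3\,\chi_2(g^{2m}-1)$, with $\beta_0 = 0$ and $m$ read modulo $(p-1)/2$. Hence the matrix splits as $B \cdot D$, where $B_{ij} = \beta_{i-j}$ is circulant and $D = \mathrm{diag}(1, g^6, g^{12}, \ldots, g^{3(p-3)})$; a short exponent calculation using $p \equiv 1 \pmod 4$ gives $\det D \equiv -1 \pmod p$. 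The eigenvalues of $B$, taken over the $(p-1)/2$-th roots of unity $\omega^k \in \mathbb{F}_p$ (where $\omega := g^2$), are
\[ \lambda_k \;=\; \sum_{a \in \mathrm{QR}}(a-1)^3\,\chi_2(a-1)\,a^k \;=\; \tfrac12\sum_{a \in \mathbb{F}_p^*}(1+\chi_2(a))(a-1)^3\,\chi_2(a-1)\,a^k, \]
for $k = 0, 1, \ldots, (p-3)/2$, and $\det B = \prod_k \lambda_k$.

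Expanding $(a-1)^3$ exhibits each $\lambda_k$ as a combination of four standard character sums in $\chi_2$ weighted by monomials $a^r$; the untwisted ones are handled by the orthogonality relation $\sum_{a \in \mathbb{F}_p^*} a^s \equiv -1$ or $0 \pmod p$ according as $(p-1)\mid s$ or not, while the twisted ones reduce to Jacobi sums. A substitution $m \to (p-1)/2 - m$ inside $\beta_m$ produces a palindromic relation of the form $\lambda_{(p-1)/2 - k - 3} = -\lambda_k$, and exploiting this pairs the eigenvalues so that $\det B$ becomes manifestly a perfect square (up to an easily computable sign), yielding an explicit expression for $\sqrt{S_{n,2}(-1,p)} \pmod p$. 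The parity factor $(-1)^{|\{0 < k < p/4 : \chi_2(k) = -1\}|}$ should then appear via a Gauss'-lemma style count when the product over half the quadratic residues is folded into an interval product over $(0, p/4)$, parallel to the proof of Conjecture~\ref{conj01}.

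The main obstacle is the final step, in which the Legendre symbol of the resulting square root must be matched with $\left(\frac{p}{4+(-1)^{(p-1)/4}}\right)$. Since this quantity equals $\left(\frac{p}{5}\right)$ when $p \equiv 1 \pmod 8$ and $\left(\frac{p}{3}\right)$ when $p \equiv 5 \pmod 8$, the argument must split on $p \pmod 8$; within each case the Jacobi-sum value must be identified, via the representation $p = a^2 + b^2$ together with supplements of quadratic reciprocity, with a Legendre symbol relative to the small prime $3$ or $5$. Combining those two case evaluations with the parity factor into the single uniform formula of Conjecture~\ref{conj02} is the delicate part of the proof.
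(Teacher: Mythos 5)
Your reduction is sound as far as it goes: the Fermat rewriting $(\alpha_i-\alpha_j)^{3+\frac{p-1}{2}}\equiv(\alpha_i-\alpha_j)^3\chi_2(\alpha_i-\alpha_j)\pmod p$, the factorization $BD$ with $B$ circulant and $\det D\equiv-1\pmod p$, the eigenvalue formula over the $\frac{p-1}{2}$-th roots of unity in $\mathbb{F}_p$, and the pairing $\lambda_{\frac{p-1}{2}-k-3}=-\lambda_k$ (which indeed has no fixed points because $\frac{p-1}{2}$ is even, so the product of eigenvalues is $(-1)^{\frac{p-1}{4}}$ times a square) are all correct. This is also a genuinely different route from the paper's: there, one writes $(\alpha_i+d\alpha_j)^m=\alpha_j^m P(\alpha_i\alpha_j^{-1})$ for an explicit polynomial $P$ of degree less than $\frac{p-1}{k}$ and applies Lemma \ref{Krattenthaler1}, so the determinant comes out directly as $a^2b$ with $a,b$ explicit products of binomial coefficients (Lemma \ref{thm0}); your circulant eigenvalues are, up to units, exactly the coefficients of that $P$, so the two methods are dual, but the paper's version hands you the factored answer at once.

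The difficulty is that your proposal stops precisely where the content of the conjecture begins, and you say so yourself (``the main obstacle'', ``the delicate part''). No closed form for $\prod_k\lambda_k$ is derived: each $\lambda_k$ is left as a combination of four twisted character sums, the double factorials and the factor $(-1)^{|\{0<k<\frac p4:\left(\frac kp\right)=-1\}|}$ are only ``expected to appear'', and the match with $\left(\frac{p}{4+(-1)^{(p-1)/4}}\right)$ is announced rather than performed, so the argument as written is a programme, not a proof. Moreover, the plan to pin down the Jacobi-sum values ``via the representation $p=a^2+b^2$'' points in the wrong direction: reduced modulo $p$ these sums collapse, via $\sum_{a}a^s\equiv -1$ or $0\pmod p$, to explicit binomial coefficients, and the primes $3$ and $5$ enter as the concrete constants $\left(\frac{10}{p}\right)$ (when $p\equiv1\pmod 8$) and $\left(\frac 3p\right)$ (when $p\equiv5\pmod 8$) coming from $\binom{\frac{p-1}{2}+3}{l}+\binom{\frac{p-1}{2}+3}{3-l}\equiv\frac{63}{48},\frac{35}{8}\pmod p$ for $l=0,1$, combined with $\left(\frac 2p\right)$ and the identity $\left(\frac{\frac{p-3}{2}!!}{p}\right)=(-1)^{|\{0<k<\frac p4:\left(\frac kp\right)=-1\}|}$ (Lemma \ref{Sun1.1}); no quadratic partition of $p$ is involved. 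Finally, $p=5$ lies outside the range of Theorem \ref{thm2} and of your expansion (there $3+\frac{p-1}{2}\ge 2\cdot\frac{p-1}{2}$), so it requires the separate $2\times2$ computation that the paper carries out.
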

\begin{conj}\cite[Conjecture $6.5$]{Sun2}\label{conj03}
    For any positive odd integer $m,$ the set 
    \begin{equation*}
        E_2(m) = \{p : p\ \text{is a prime with}\ 4\ |\ p-1\ \text{and}\ p\ |\ S_{m+\frac{p-1}{2},2}(-1,p)\} 
    \end{equation*}
    is finite. In particular,
    \begin{align*}
       & E_2(5) = \{29\},\ E_2(7) = \{13,53\}, E_2(9) = \{13,17,29\}, \\ 
       & E_2(11) = \{17,29\}\ \text{and}\ E_2(13) = \{17,109,401\}.
    \end{align*}
    \begin{rem}
        Sun \cite{Sun2} provided the list of $E_2(m)$ based on his calculations for primes $< 1000.$
    \end{rem}
\end{conj}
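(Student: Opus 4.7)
Write $N=\frac{p-1}{2}$, $n=m+N$, and let $\alpha_1,\dots,\alpha_N$ be the quadratic residues modulo $p$. The strategy is to reduce $S_{m+\frac{p-1}{2},2}(-1,p)\pmod p$ to a rational number depending only on the odd parameter $m$; once this is achieved, finiteness of $E_2(m)$ is immediate. Expanding by the binomial theorem,
\begin{equation*}
(\alpha_i-\alpha_j)^n=\sum_{r=0}^{n}\binom{n}{r}(-1)^{n-r}\alpha_i^{r}\alpha_j^{n-r},
\end{equation*}
factors the entry matrix as $A\Lambda B^{\top}$, where $A=[\alpha_i^r]$ and $B=[\alpha_j^{n-r}]$ are $N\times(n+1)$ matrices with columns indexed by $r\in\{0,\dots,n\}$ and $\Lambda$ is diagonal with entries $\binom{n}{r}(-1)^{n-r}$. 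The Cauchy--Binet formula writes $S_{n,2}(-1,p)$ as a signed sum over $N$-subsets $S\subseteq\{0,\dots,n\}$ of $\det(A_{\cdot,S})\det(\Lambda_S)\det(B_{\cdot,S})$.

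Because $\alpha_i^N\equiv 1\pmod p$, the $r$-th column of $A$ is congruent modulo $p$ to the column indexed by $r\bmod N$. In the range $0\le r\le m+N$, each residue $0,1,\dots,m$ is attained twice (at $r$ and $r+N$) while each residue $m+1,\dots,N-1$ is attained once. The Cauchy--Binet terms surviving mod $p$ are therefore indexed by the $2^{m+1}$ subsets $S$ that choose one representative per residue class, and for any such $S$ one has $\det(A_{\cdot,S})\det(B_{\cdot,S})\equiv\pm V^2\pmod p$, where $V=\prod_{i<j}(\alpha_i-\alpha_j)$ is the Vandermonde of the QRs (nonzero modulo $p$). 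Thus
\begin{equation*}
S_{m+\frac{p-1}{2},2}(-1,p)\equiv V^2\cdot P_m(p)\pmod p,
\end{equation*}
where $P_m(p)$ is an explicit signed sum of $2^{m+1}$ terms built from the binomial coefficients $\binom{m+N}{r}$ for $0\le r\le N-1$.

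The decisive point is that each such binomial coefficient is essentially $p$-independent modulo $p$. Combining Vandermonde's convolution $\binom{m+N}{r}=\sum_{\ell}\binom{m}{\ell}\binom{N}{r-\ell}$ with the classical congruence $\binom{(p-1)/2}{k}\equiv\binom{2k}{k}/(-4)^k\pmod p$ (valid for $0\le k<p$) gives $\binom{m+N}{r}\equiv c_{m,r}\pmod p$ for a fixed rational $c_{m,r}$ depending only on $m$ and $r$. Substituting into $P_m(p)$ yields $P_m(p)\equiv P_m^{\star}\pmod p$ for a fixed $P_m^{\star}\in\mathbb Q$ independent of $p$. Consequently, outside a finite set of small primes where the reductions above fail, the members of $E_2(m)$ are precisely the prime divisors of the numerator of $P_m^{\star}$, which is a finite set.

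The main obstacle is showing $P_m^{\star}\ne 0$ for every odd $m\ge 1$, since a priori the $2^{m+1}$ signed contributions might cancel. To rule this out I would combine Sun's theorem that $S_{m+\frac{p-1}{2},2}(-1,p)$ is a square mod $p$ with Theorem \ref{thm2}(a) for $k=2$ (both of which constrain $P_m^{\star}$ modulo $p$ to be a square, never a quadratic nonresidue), and then evaluate $P_m^{\star}$ in a hypergeometric closed form arising from the convoluted binomials; the explicit sets in Conjecture \ref{conj03} should then materialize as the prime factors of the numerator of $P_m^{\star}$ in reduced form.
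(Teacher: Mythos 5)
Your overall skeleton (binomial expansion of $(\alpha_i-\alpha_j)^{m+\frac{p-1}{2}}$, reduction of exponents via $\alpha^{\frac{p-1}{2}}\equiv 1\pmod p$, extraction of a Vandermonde factor $V^2$) is the same one the paper uses, but the way you organize it through Cauchy--Binet leaves you with $S_{m+\frac{p-1}{2},2}(-1,p)\equiv V^2\,P_m(p)\pmod p$ where $P_m(p)$ is an \emph{unfactored} signed sum of $2^{m+1}$ terms, and this is exactly where the argument breaks. The paper instead reduces the polynomial $(x+d)^{m+\frac{p-1}{2}}$ modulo $x^{\frac{p-1}{2}}-1$ \emph{before} taking any determinant, so that the coefficients of $x^{l}$ and $x^{l+\frac{p-1}{2}}$ are merged into single coefficients $\binom{m+\frac{p-1}{2}}{l}+d^{-\frac{p-1}{2}}\binom{m+\frac{p-1}{2}}{m-l}$, and then applies Krattenthaler's product formula (Lemma \ref{Krattenthaler1}) to a polynomial of degree less than $\frac{p-1}{2}$. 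The payoff (Lemma \ref{thm0} and Theorem \ref{thm5}) is that your $P_m(p)$ factors, up to manifestly nonzero units, into the $\frac{m+1}{2}$ binomial sums $\binom{m+\frac{p-1}{2}}{l}+\binom{m+\frac{p-1}{2}}{m-l}$, each of which reduces for $p>2m+1$ to a $p$-independent \emph{positive} rational, namely $\bigl((2m)!!+(2m-1)!!\bigr)/(2m)!!$ for $l=0$ and, up to a unit, $\frac{(2m-2l)!!}{(2l)!!}+\frac{(2m-2l-1)!!}{(2l-1)!!}$ for $1\le l\le\frac{m-1}{2}$. Non-vanishing is then immediate, and $E_2(m)$ consists of finitely many small primes together with the prime divisors of a fixed nonzero integer.

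The step you flag as ``the main obstacle'' --- showing $P_m^{\star}\neq 0$ --- is indeed the crux, and your proposed resolution does not work. Sun's result that $S_{m+\frac{p-1}{2},2}(-1,p)$ is a square modulo $p$, and Theorem \ref{thm2}(a), only force $P_m^{\star}$ to be a square or zero modulo each prime; that is consistent with $P_m^{\star}=0$ as a rational number, and in fact the relevant quantity genuinely vanishes modulo every prime of $E_2(m)$, a set which is nonempty for all odd $m\ge 5$, so no quadratic-residue constraint can yield the non-vanishing. Without the product factorization there is no visible reason for your $2^{m+1}$-term alternating sum to be nonzero, and the ``hypergeometric closed form'' is asserted rather than produced. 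Two smaller points: the signs $\pm$ in $\det(A_{\cdot,S})\det(B_{\cdot,S})\equiv\pm V^2$ depend on $S$ and must be tracked for the surviving terms to recombine into the product above; and the explicit lists in the conjecture require a finite verification your sketch does not perform --- note that the paper's own computation shows Sun's list for $E_2(13)$ is incomplete (the correct set is $\{17,109,401,29629,924397\}$), so only the finiteness assertion is provable as stated.
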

If $m$ is odd and $p\equiv1$ $($mod $2k)$, then it is easy to see that $S_{m+\frac{p-1}{k},k}(-1,p)$ is a skew-symmetric matrix of even order, and hence 
$S_{m+\frac{p-1}{k},k}(-1,p)$ is an integral square. For certain values of $m$, we now investigate residue properties of $\sqrt{S_{m+\frac{p-1}{k},k}(-1,p)}$. For $a, e \in \mathbb{N},$ we define the $e$- factorial, denoted by $a!_{(e)}$, as
\begin{equation*}
   a!_{\left(e \right)} =
   \begin{cases}
       a \cdot (a - e)!_{\left(e \right)} , &  \text{if}\ a > e; \\
       a, & \text{if}\ 1 \leq a \leq e.
   \end{cases}
\end{equation*}
In particular, we denote $a!_{(2)}$ by $a!!$
\begin{thm}\label{thm3} Let $p$ be an odd prime and $k\in \mathbb{N}$ with $2 \leq k \leq \frac{p-1}{2}$ such that $p \equiv 1 \pmod{2k}$. Suppose $\alpha_i$ are distinct $k$-th residues modulo $p$ and $T(\frac{p-1}{k}) := \displaystyle\prod_{1 \leq i < j \leq \frac{p-1}{k}}(\alpha_i - \alpha_j).$
    \begin{itemize}
        \item[$(a)$]  If $p \equiv 1 \pmod{4k},$ then $$\left(\frac{\sqrt{S_{1+\frac{p-1}{k},k}(-1,p)}}{p}\right) = \left(\frac{(k-1)(2k-1)}{p}\right) \left(\frac{\frac{p-1}{k}!!}{p}\right) \left(\frac{T\left(\frac{p-1}{k}\right)}{p}\right).$$
        \item[$(b)$] If $k$ is even and $p \equiv 2k+1 \pmod{4k},$ then
        \begin{equation*}
            \left(\frac{\sqrt{S_{1+\frac{p-1}{k},k}(-1,p)}}{p}\right) = 
            \begin{cases}
                \left(\frac{T(\frac{p-1}{k})}{p}\right), & \text{if}\ p = 2k+1;\\
                 \left(\frac{k(2k-1)}{p}\right) \left(\frac{(\frac{p-1}{k}-1)!!}{p}\right) \left(\frac{T(\frac{p-1}{k})}{p}\right), & \text{otherwise.}
            \end{cases}
        \end{equation*}
    \end{itemize}
\end{thm}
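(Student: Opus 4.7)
My plan is to compute $S := S_{1+\ell,k}(-1,p) = \det[(\alpha_i-\alpha_j)^{\ell+1}]$ modulo $p$ via Cauchy--Binet, where $\ell = (p-1)/k$. Since $\ell$ is even and $\ell+1$ odd, the matrix $M := [(\alpha_i-\alpha_j)^{\ell+1}]$ is skew-symmetric of even order, so $\sqrt S = \pm\mathrm{Pf}(M)$ is a well-defined integer; because $p\equiv 1\pmod 4$ holds in both parts~(a) and~(b), this sign ambiguity will not affect the final Legendre symbol. The binomial expansion
\begin{equation*}
(\alpha_i-\alpha_j)^{\ell+1} = \sum_{r=0}^{\ell+1} c_r \alpha_i^r\alpha_j^{\ell+1-r},\qquad c_r:=\binom{\ell+1}{r}(-1)^{\ell+1-r},
\end{equation*}
factors $M = X C X^{\top}$, where $X=[\alpha_i^r]$ is the $\ell\times(\ell+2)$ ``extended Vandermonde'' and $C$ is the antidiagonal matrix carrying the $c_r$. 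The relation $\alpha_i^\ell\equiv 1\pmod p$ forces columns $0$ and $\ell$ of $X$ to coincide mod $p$, and likewise columns $1$ and $\ell+1$; hence among the $\binom{\ell+2}{2}$ subsets appearing in Cauchy--Binet, only the four that omit exactly one of $\{0,\ell\}$ and one of $\{1,\ell+1\}$ yield a nonzero contribution.

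For each of the four surviving $S$, I would permute columns into Vandermonde order while tracking signs, and verify that $\det(X_S)\det(X_{S'}) = V^2$, where $V$ is the standard Vandermonde determinant and $S' = \{\ell+1-r : r\in S\}$. The four missing pairs $\{\ell,\ell+1\},\{1,\ell\},\{0,\ell+1\},\{0,1\}$ contribute binomial factors $\ell+1,(\ell+1)^2,1,\ell+1$ in the denominator, whose reciprocals sum to $(\ell+2)^2/(\ell+1)^2$. The common sign $(-1)^{\sum_{r\in S}r}$, which is identical across all four cases (because the four ``missing sums'' have matching parity for $\ell$ even), combines with the column-reversal sign $(-1)^{\lfloor\ell/2\rfloor}$ from the antidiagonal $C$ to give an overall $+1$. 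This yields the congruence
\begin{equation*}
S \equiv V^2\cdot\frac{(\ell+2)^2 P}{(\ell+1)^2}\pmod p,\qquad P:=\prod_{r=0}^{\ell+1}\binom{\ell+1}{r},
\end{equation*}
and Pascal's symmetry $\binom{\ell+1}{r}=\binom{\ell+1}{\ell+1-r}$ makes $P$ an integer square with $\sqrt P = \prod_{r=0}^{\ell/2}\binom{\ell+1}{r} = ((\ell+1)!)^{\ell/2}/\prod_{s=0}^{\ell}s!$, so $\sqrt S\equiv\pm V(\ell+2)\sqrt P/(\ell+1)\pmod p$.

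The last step is to take Legendre symbols. As $V = (-1)^{\binom{\ell}{2}}T$ and $\left(\tfrac{-1}{p}\right)=1$, we have $\left(\tfrac{V}{p}\right)=\left(\tfrac{T}{p}\right)$. The reductions $\ell+1\equiv(k-1)/k$ and $\ell+2\equiv(2k-1)/k\pmod p$ give $\left(\tfrac{(\ell+1)(\ell+2)}{p}\right)=\left(\tfrac{(k-1)(2k-1)}{p}\right)$. For $\sqrt P$, the closed form above with $\prod_{s=0}^\ell s!=\prod_{j=1}^\ell j^{\ell+1-j}$ yields
\begin{equation*}
\left(\frac{\sqrt P}{p}\right) = \left(\frac{(\ell+1)!}{p}\right)^{\ell/2}\prod_{j=1}^{\ell}\left(\frac{j}{p}\right)^{\ell+1-j},
\end{equation*}
and since $\ell+1-j$ is even precisely when $j$ is odd, the $j$-product collapses to $\left(\tfrac{\ell!!}{p}\right)$. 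In part~(a), where $\ell/2$ is even, the $(\ell+1)!$ factor drops, giving $\left(\tfrac{\sqrt P}{p}\right)=\left(\tfrac{\ell!!}{p}\right)$ and directly the stated identity. In part~(b), where $\ell/2$ is odd, using $(\ell+1)!=(\ell+1)\ell!$ and $\ell!=\ell!!(\ell-1)!!$ produces $\left(\tfrac{\sqrt P}{p}\right) = \left(\tfrac{(\ell+1)(\ell-1)!!}{p}\right)$; combining the extra $(\ell+1)\equiv(k-1)/k$ with $\left(\tfrac{(k-1)(2k-1)}{p}\right)$ gives $\left(\tfrac{(k-1)^2(2k-1)/k}{p}\right)=\left(\tfrac{k(2k-1)}{p}\right)$, producing the claimed formula. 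The degenerate case $p=2k+1$ is checked directly using $k(2k-1)\equiv 1\pmod{2k+1}$ and $(\ell-1)!!=1$.

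I expect the main obstacle to be the sign bookkeeping in Cauchy--Binet: verifying that each of the four surviving Vandermonde submatrices contributes $+V^2$ (one case involves a product $(-V)(-V)$ from a column cyclic shift), and that the $(-1)^{\ell+1-r}$ signs in $c_r$, the column-reversal sign from the antidiagonal matrix $C$, and the uniform $(-1)^{\sum_{r\in S}r}$ across the four cases really combine to produce the clean factor $+1$ multiplying $V^2(\ell+2)^2 P/(\ell+1)^2$, with no hidden parity-dependent corrections.
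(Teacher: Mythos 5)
Your proposal is correct, and the sign bookkeeping you flagged as the main risk does work out: with $\ell=\frac{p-1}{k}$, the reversal sign $(-1)^{\binom{\ell}{2}}$ from the antidiagonal minor of $C$ and the uniform sign $(-1)^{\sum_{r\in S}r}=(-1)^{\ell/2}$ cancel, the four surviving Cauchy--Binet terms each contribute $+V^2$ (the self-paired subset omitting $\{0,\ell+1\}$ indeed gives $(-V)(-V)$), and the omitted binomial products $\ell+1,\ \ell+1,\ 1,\ (\ell+1)^2$ have reciprocals summing to $(\ell+2)^2/(\ell+1)^2$, so $S\equiv V^2(\ell+2)^2P/(\ell+1)^2\pmod p$ as claimed. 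This coincides with the paper's intermediate congruence $\sqrt{S}\equiv\pm T\left(\ell+2\right)\prod_{j=2}^{\ell/2}\binom{\ell+1}{j}\pmod p$, but you reach it by a genuinely different route. The paper first factors out $\prod_j\alpha_j^m$ to rewrite the entries as $(\alpha_i/\alpha_j-1)^m$, folds the exponents modulo $\ell$ using $\alpha^{\ell}\equiv1$, and applies Lemma~\ref{Krattenthaler1} to a polynomial of degree below $\ell$; this is packaged once and for all as Lemma~\ref{thm0}, a general congruence valid for every admissible $m$ and every $d$ with $\chi_k(d)=\pm1$, from which this theorem is read off by specialization. You instead keep the rectangular factorization $M=XCX^{\top}$ and let Cauchy--Binet together with the mod-$p$ column coincidences $0\equiv\ell$, $1\equiv\ell+1$ perform the reduction; this is self-contained and transparent for the single case $m=\ell+1$, $d=-1$, but it would need reworking for exponents like $\ell+3$ (Theorem~\ref{thm4}), where more subsets survive, whereas the paper's lemma covers those uniformly. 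The endgames also differ: the paper evaluates $\left(\frac{\prod_l\binom{\ell+1}{2+l}}{p}\right)$ by a telescoping rearrangement of the binomial product, while you use the closed form $\prod_{r=0}^{\ell/2}\binom{\ell+1}{r}=((\ell+1)!)^{\ell/2}\big/\prod_{s=0}^{\ell}s!$ and a parity count of exponents; both yield $\left(\frac{\ell!!}{p}\right)$ when $\ell/2$ is even and $\left(\frac{(\ell+1)(\ell-1)!!}{p}\right)$ when $\ell/2$ is odd, and your reductions $\ell+1\equiv(k-1)/k$, $\ell+2\equiv(2k-1)/k\pmod p$ recover the stated formulas, including the degenerate case $p=2k+1$.
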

For $k=2$, Theorem \ref{thm3} yields 
\begin{equation}\label{eq1}
    \left(\frac{\sqrt{ S_{1+\frac{p-1}{2},2}(-1,p)}}{p}\right) =
        \begin{cases}
            \left(\frac{3}{p}\right) \left(\frac{\frac{p-1}{2}!!}{p}\right)\left(\frac{T\left(\frac{p-1}{2}\right)}{p}\right), &\ \text{if}\ p \equiv 1 \pmod{8}; \\
            \left(\frac{T\left(\frac{p-1}{2}\right)}{p}\right), &\ \text{if}\  p = 5; \\
             \left(\frac{6}{p}\right) \left(\frac{\frac{p-3}{2}!!}{p}\right)\left(\frac{T\left(\frac{p-1}{2}\right)}{p}\right), &\ \text{if}\ p \equiv 5 \pmod{8} \text{ and } p \ne 5. \\
        \end{cases}
    \end{equation}
From Lemma \ref{Sun1.2}, we note that
 $$\left(\frac{2}{p}\right) =  \left(\frac{\frac{p-1}{2}!}{p}\right) =  \left(\frac{\frac{p-1}{2}!!\cdot \frac{p-3}{2}!!}{p}\right),$$ and hence 
 \begin{equation}
 \label{eq2}
     \left(\frac{\frac{p-1}{2}!!}{p}\right) =\left(\frac{\frac{p-3}{2}!!}{p}\right)\left(\frac{2}{p}\right).
 \end{equation} 
Moreover, Lemma \ref{Sun1.3} and Lemma \ref{Sun1.2} together provide
\begin{equation}
\label{eq3}
\left(\frac{T\left(\frac{p-1}{2}\right)}{p}\right) =\left(\frac{2}{p}\right).
\end{equation}
Plugging \eqref{eq2} and \eqref{eq3} in \eqref{eq1}, and then using Lemma \ref{Sun1.1} and Theorem \ref{Burton1}, we immediately have the following corollary.
\begin{cor}\label{conj1}
    Conjecture \ref{conj01} is true.
\end{cor}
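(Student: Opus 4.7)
The plan is straightforward: the identity \eqref{eq1} already expresses $\left(\frac{\sqrt{S_{1+(p-1)/2,2}(-1,p)}}{p}\right)$ in three cases according to $p\bmod 8$, and the excerpt points to exactly the tools needed. I would simply combine \eqref{eq1} with \eqref{eq2}, \eqref{eq3}, Lemma \ref{Sun1.1}, and Theorem \ref{Burton1} and show that all three cases collapse to the right-hand side of Conjecture \ref{conj01}.

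First I would substitute \eqref{eq3} into \eqref{eq1} to replace $\left(\frac{T((p-1)/2)}{p}\right)$ by $\left(\frac{2}{p}\right)$ in each case. In the case $p\equiv 1\pmod{8}$, this already produces $\left(\frac{3}{p}\right)\left(\frac{(p-1)/2!!}{p}\right)\left(\frac{2}{p}\right)$. In the case $p\equiv 5\pmod{8}$ with $p\neq 5$, I would apply \eqref{eq2} to convert $\left(\frac{(p-3)/2!!}{p}\right)$ into $\left(\frac{(p-1)/2!!}{p}\right)\left(\frac{2}{p}\right)$, and split $\left(\frac{6}{p}\right)=\left(\frac{2}{p}\right)\left(\frac{3}{p}\right)$. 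Collecting the three resulting copies of $\left(\frac{2}{p}\right)$ gives a single $\left(\frac{2}{p}\right)$, and the two mod-$8$ subcases are thus unified into the expression $\left(\frac{3}{p}\right)\left(\frac{(p-1)/2!!}{p}\right)\left(\frac{2}{p}\right)$. The boundary case $p=5$ would be verified by direct evaluation.

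Next I would invoke Lemma \ref{Sun1.1} to evaluate $\left(\frac{(p-1)/2!!}{p}\right)\left(\frac{2}{p}\right)$ (for $p\equiv 1\pmod 4$) as $(-1)^{|\{0<k<p/4:(k/p)=-1\}|}$, which is exactly the $\pm 1$ factor appearing in Conjecture \ref{conj01}. Finally, since $p\equiv 1\pmod{4}$, Theorem \ref{Burton1} (quadratic reciprocity) gives $\left(\frac{3}{p}\right)=\left(\frac{p}{3}\right)$. Stringing these identifications together yields exactly the claimed formula.

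The main point of care is the sign bookkeeping of the various $\left(\frac{2}{p}\right)$'s across the two residue classes $p\equiv 1,5\pmod 8$: I have to make sure the leftover $\left(\frac{2}{p}\right)$ after substitution matches precisely what Lemma \ref{Sun1.1} pairs with $\left(\frac{(p-1)/2!!}{p}\right)$ to output the non-residue counting function on $(0,p/4)$. Everything else is mechanical substitution, so no genuine obstacle is expected once the statement of Lemma \ref{Sun1.1} is lined up correctly with the residue classes.
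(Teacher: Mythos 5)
Your proposal is correct and follows essentially the same route as the paper, which likewise obtains the corollary by substituting \eqref{eq2} and \eqref{eq3} into \eqref{eq1} and then applying Lemma \ref{Sun1.1} together with quadratic reciprocity (Theorem \ref{Burton1}) to convert $\left(\frac{3}{p}\right)$ into $\left(\frac{p}{3}\right)$; your sign bookkeeping with the copies of $\left(\frac{2}{p}\right)$ is accurate, and the two residue classes modulo $8$ do collapse to the single expression you state. The only detail worth recording is the direct check at $p=5$, which you correctly flag and which indeed confirms the formula.
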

We also deduce another similar result in the following theorem.
\begin{thm}\label{thm4} 
    Let $p$ be an odd primes and $k\in\mathbb{N}$ with $2 \leq k \leq \frac{p-1}{2}$ such that $p \equiv 1 \pmod{2k}$. Suppose $\alpha_i$ are distinct $k$-th residues modulo $p$ and $T\left(\frac{p-1}{k}\right) = \displaystyle\prod_{1 \leq i < j \leq \frac{p-1}{k}}(\alpha_i - \alpha_j).$
    \begin{itemize}
        \item[$(a)$] If $p \equiv 1 \pmod{4k},$ then
        \begin{align*}
           \left(\frac{\sqrt{S_{3+\frac{p-1}{k},k}(-1,p)}}{p}\right)  = &  \left(\frac{k(3k-1)(4k-1)\left(6k^3+(3k-1)(2k-1)(k-1)\right)}{p}\right) \\ & \left(\frac{(\frac{p-1}{k}-1)!!}{p}\right) \left(\frac{T\left(\frac{p-1}{k}\right)}{p}\right).  
        \end{align*}
        \item[$(b)$] If $k$ is even and $p \equiv 2k+1 \pmod{4k},$ then
        \begin{equation*}
            \left(\frac{\sqrt{S_{3+\frac{p-1}{k},k}(-1,p)}}{p}\right) = 
            \begin{cases}
                \left(\frac{T\left(\frac{p-1}{k}\right)}{p}\right), & \text{if}\ p = 2k+1;\\
                  \left(\frac{(k-1)(2k-1)(4k-1)\left(6k^3+(3k-1)(2k-1)(k-1)\right)}{p}\right)\\ \left(\frac{\left(\frac{p-1}{k}\right)!!}{p}\right) \left(\frac{T\left(\frac{p-1}{k}\right)}{p}\right), & \text{otherwise.}
            \end{cases}
        \end{equation*}
    \end{itemize}
\end{thm}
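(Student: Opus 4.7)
The plan is to follow the same strategy as the proof of Theorem \ref{thm3}, with the exponent $1+\frac{p-1}{k}$ replaced by $3+\frac{p-1}{k}$. Set $N:=\frac{p-1}{k}$; under the hypothesis $p\equiv 1\pmod{2k}$, the integer $N$ is even, hence $N+3$ is odd and $M:=[(\alpha_i-\alpha_j)^{N+3}]_{1\le i,j\le N}$ is skew-symmetric of even order, so $\det M$ is automatically an integer square and $\sqrt{\det M}$ is defined up to sign. The first step is to expand each entry as
\begin{equation*}
(\alpha_i-\alpha_j)^{N+3}\equiv\sum_{t=0}^{N+3}\binom{N+3}{t}(-1)^{t}\alpha_i^{N+3-t}\alpha_j^{t}\pmod p,
\end{equation*}
and then use $\alpha_\ell^{N}\equiv 1\pmod p$, which holds since each $\alpha_\ell$ is a $k$-th power residue, to reduce the contributions from the index sets $t\in\{0,1,2,3\}$ and $t\in\{N,N+1,N+2,N+3\}$ to monomials in $\alpha_i,\alpha_j$ of degree at most $3$ in each variable.

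In the second stage, I would carry out row and column operations, aided by the power-sum identity $\sum_{\ell=1}^{N}\alpha_\ell^{e}\equiv 0\pmod p$ whenever $N\nmid e$ (and $\equiv N\equiv -1/k$ when $N\mid e$), to bring the computation of $\det M$ modulo $p$ to the factorization
\begin{equation*}
\det M\equiv T\!\left(\tfrac{p-1}{k}\right)^{2}\cdot(\text{double factorial factor})\cdot C(k,p)\pmod p,
\end{equation*}
where $C(k,p)$ is the determinant of a $4\times 4$ matrix built from the eight relevant binomial coefficients $\binom{N+3}{t}$. This mirrors the Vandermonde-type factorization that appears in the proof of Theorem \ref{thm3}, with a $4\times 4$ block replacing the $2\times 2$ block present there, the extra two rows arising because the exponent is $N+3$ rather than $N+1$.

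The third stage is the explicit evaluation of $C(k,p)$. Substituting $N\equiv -1/k\pmod p$, which follows from $kN=p-1\equiv -1\pmod p$, into each binomial coefficient $\binom{N+3}{t}$ turns $C(k,p)$ into a polynomial in $k$ modulo $p$. A cofactor expansion is expected to produce the cubic $6k^{3}+(3k-1)(2k-1)(k-1)$ as its irreducible cubic factor, together with the linear factors $k$, $3k-1$, $4k-1$ in case $(a)$, and $k-1$, $2k-1$, $4k-1$ in case $(b)$. The switch between the two cases is accounted for by the parity $(-1)^{N}$, which equals $+1$ when $p\equiv 1\pmod{4k}$ and $-1$ when $p\equiv 2k+1\pmod{4k}$ with $k$ even; this sign flips one column of the $4\times 4$ block and exchanges the two triples of linear factors. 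The degenerate subcase $p=2k+1$ (so $N=2$ and $M$ is $2\times 2$) is handled by direct evaluation, giving only the $T\!\left(\frac{p-1}{k}\right)$ contribution.

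The main obstacle I anticipate is the bookkeeping for the $4\times 4$ determinant $C(k,p)$: writing it out cleanly enough that the cubic $6k^{3}+(3k-1)(2k-1)(k-1)$ drops out unambiguously after the substitution $N\leftrightarrow -1/k$, rather than as a messier rational function of $k$. A secondary difficulty is consolidating the Legendre-symbol contributions from the double-factorial terms and from $T\!\left(\frac{p-1}{k}\right)^{2}$ so they match the stated expressions; here the identities already used in the proof of Theorem \ref{thm3}, together with the analogs of \eqref{eq2} and \eqref{eq3}, should carry over with only the polynomial-in-$k$ factor changing. Once $C(k,p)$ is computed, taking the Legendre symbol of the full factorization completes the argument.
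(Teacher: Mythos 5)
Your overall strategy --- expand $(\alpha_i-\alpha_j)^{N+3}$ with $N=\frac{p-1}{k}$, reduce via $\alpha_\ell^{N}\equiv 1\pmod p$, factor the determinant into a Vandermonde part times a small coefficient block, and treat $p=2k+1$ by direct evaluation --- is in substance the paper's route: the paper packages exactly this factorization as Lemma \ref{thm0} (proved via Krattenthaler's lemma), obtaining $S_{N+3,k}(-1,p)\equiv a^2b\pmod p$ with $b=1$ and $a$ an explicit product of binomial expressions times $T(N)$. Your wrap-around block is genuinely there, and since its nonzero entries sit on an antidiagonal it collapses to the product of the factors $\binom{N+3}{l}+\binom{N+3}{3-l}$, $l=0,1$ (each appearing twice), whose reduction under $N\equiv -\frac{1}{k}\pmod p$ does produce $(3k-1)(4k-1)$ and $6k^3+(3k-1)(2k-1)(k-1)$.

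The gap is in your account of where the dichotomy between $(a)$ and $(b)$ comes from. You assert that $(-1)^{N}=-1$ when $p\equiv 2k+1\pmod{4k}$ with $k$ even, and that this sign flips a column of the $4\times 4$ block and exchanges the two triples of linear factors. But $2k\mid p-1$ forces $N$ to be even in both cases ($N\equiv 0$ or $2\pmod 4$), so $(-1)^{N}=+1$ throughout and the wrap-around block contributes identically in $(a)$ and $(b)$. The relevant parity is that of $\frac{p-1}{2k}=N/2$, and it enters not through the block but through the long product $\prod_{0\le l<\frac{N}{2}-2}\binom{N+3}{4+l}$ of plain binomial coefficients, which you leave as an unexamined ``double factorial factor.'' Its Legendre symbol telescopes to $\left(\frac{3}{p}\right)\left(\frac{(N-1)!!}{p}\right)$ when $N/2$ is even, but to $\left(\frac{3}{p}\right)\left(\frac{N!!}{p}\right)\left(\frac{k(k-1)(2k-1)(3k-1)}{p}\right)$ when $N/2$ is odd; it is precisely this extra factor that converts the common contribution $3k(3k-1)(4k-1)\left(6k^3+(3k-1)(2k-1)(k-1)\right)$ into $k(3k-1)(4k-1)(\cdots)$ in case $(a)$ versus $(k-1)(2k-1)(4k-1)(\cdots)$ in case $(b)$. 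Without this computation the two stated formulas cannot be derived, so as written the proposal does not close; the $4\times 4$-block bookkeeping you worry about is actually the easy part.
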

For $k = 2$, Theorem $\ref{thm4}$ implies 
\begin{equation*}
       \left(\frac{\sqrt{S_{3+\frac{p-1}{2},2(-1,p)}}}{p}\right) =
        \begin{cases}
            \left(\frac{10}{p}\right) \left(\frac{\frac{p-3}{2}!!}{p}\right)\left(\frac{T\left(\frac{p-1}{2}\right)}{p}\right), &\ \text{if}\ p \equiv 1 \pmod{8}; \\
            \left(\frac{T\left(\frac{p-1}{2}!!\right)}{p}\right), &\ \text{if}\ p = 5; \\
             \left(\frac{3}{p}\right) \left(\frac{\frac{p-1}{2}!!}{p}\right)\left(\frac{T\left(\frac{p-1}{2}\right)}{p}\right), &\ \text{if}\ p \equiv 5 \pmod{8} \text{ and } p \ne 5.
        \end{cases}
\end{equation*}
Using now \eqref{eq2}, \eqref{eq3}, and Theorem \ref{Burton1}, we have the following corollary.
\begin{cor}\label{conj2}
    Conjecture \ref{conj02} is true.
\end{cor}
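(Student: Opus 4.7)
The plan is to start from the three-case formula for $\left(\frac{\sqrt{S_{3+(p-1)/2,2}(-1,p)}}{p}\right)$ displayed just before the corollary, which is the specialization of Theorem \ref{thm4} to $k=2$, and reduce it case-by-case to the form claimed in Conjecture \ref{conj02}. The overall pattern should mirror the proof of Corollary \ref{conj1}: use \eqref{eq2} and \eqref{eq3} to eliminate the $T(\frac{p-1}{2})$ factor and to pass freely between $\frac{p-1}{2}!!$ and $\frac{p-3}{2}!!$, then invoke Theorem \ref{Burton1} to identify the surviving double-factorial factor with the sign $(-1)^{|\{0<k<p/4:\,(k/p)=-1\}|}$, and finally apply quadratic reciprocity (justified because $p\equiv 1\pmod 4$) to flip the ``prime constant'' Legendre symbol into the form $\left(\tfrac{p}{4+(-1)^{(p-1)/4}}\right)$.

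Concretely, the case split follows the split on $p \bmod 8$. In the case $p\equiv 1\pmod 8$, one has $(-1)^{(p-1)/4}=1$, so the target constant in the denominator is $5$. I would rewrite $\left(\tfrac{10}{p}\right)=\left(\tfrac{2}{p}\right)\left(\tfrac{5}{p}\right)$, then combine with $\left(\tfrac{T((p-1)/2)}{p}\right)=\left(\tfrac{2}{p}\right)$ from \eqref{eq3} to kill the $\left(\tfrac{2}{p}\right)$ factors, leaving $\left(\tfrac{5}{p}\right)\left(\tfrac{((p-3)/2)!!}{p}\right)$; then quadratic reciprocity converts $\left(\tfrac{5}{p}\right)$ to $\left(\tfrac{p}{5}\right)$, and Theorem \ref{Burton1} identifies the double-factorial term with the counting sign. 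In the case $p\equiv 5\pmod 8$ and $p\ne 5$, one has $(-1)^{(p-1)/4}=-1$ and the target denominator is $3$. I would combine $\left(\tfrac{T((p-1)/2)}{p}\right)=\left(\tfrac{2}{p}\right)$ with \eqref{eq2} to convert $\left(\tfrac{((p-1)/2)!!}{p}\right)\left(\tfrac{T((p-1)/2)}{p}\right)=\left(\tfrac{((p-3)/2)!!}{p}\right)\left(\tfrac{2}{p}\right)^2 = \left(\tfrac{((p-3)/2)!!}{p}\right)$, so the expression reduces to $\left(\tfrac{3}{p}\right)\left(\tfrac{((p-3)/2)!!}{p}\right)$; quadratic reciprocity gives $\left(\tfrac{3}{p}\right)=\left(\tfrac{p}{3}\right)$, and Theorem \ref{Burton1} again takes care of the double-factorial.

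The remaining case $p=5$ should be handled separately; since the displayed formula reduces to $\left(\tfrac{T((p-1)/2)!!}{p}\right)$ (which, interpreted as in the $k=2$ specialization, is a single symbol), one can verify the conjecture directly by computing $S_{3+(p-1)/2,2}(-1,5)$ and the right-hand side of Conjecture \ref{conj02} for $p=5$ by hand.

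I do not expect any serious obstacle: all the analytic content has been absorbed into Theorem \ref{thm4} (the determinantal evaluation), into Theorem \ref{Burton1} (the classical formula for $\left(\tfrac{((p-3)/2)!!}{p}\right)$ in terms of the sign $(-1)^{|\{0<k<p/4:\,(k/p)=-1\}|}$), and into \eqref{eq2}--\eqref{eq3} (the Lemmas of Sun on $\left(\tfrac{2}{p}\right)$ and on the Vandermonde-type product $T((p-1)/2)$). The only place that requires care is the bookkeeping of $\left(\tfrac{2}{p}\right)$ factors across the two residue classes $p\equiv 1,5\pmod 8$, to make sure that the ambiguity between $\frac{p-1}{2}!!$ and $\frac{p-3}{2}!!$ is resolved consistently and that the single sign $(-1)^{|\{0<k<p/4:\,(k/p)=-1\}|}$ emerges on the nose.
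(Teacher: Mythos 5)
Your proposal is correct and follows essentially the same route as the paper: specialize Theorem \ref{thm4} to $k=2$, use \eqref{eq2} and \eqref{eq3} to cancel the $\left(\frac{2}{p}\right)$ factors and reduce everything to $\left(\frac{\frac{p-3}{2}!!}{p}\right)$, convert that to the counting sign, flip the constant symbol $\left(\frac{5}{p}\right)$ or $\left(\frac{3}{p}\right)$ by quadratic reciprocity (valid since $p\equiv 1\pmod 4$), and check $p=5$ directly. One minor citation slip: the identity $\left(\frac{\frac{p-3}{2}!!}{p}\right)=(-1)^{|\{0<k<\frac{p}{4}:\left(\frac{k}{p}\right)=-1\}|}$ is Lemma \ref{Sun1.1}, not Theorem \ref{Burton1} (which is the reciprocity law), though the mathematical content of your argument is unaffected.
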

From Theorem \ref{thm3}, it is easy to see that if $p \equiv 1 \pmod{2k}$ and $p \equiv 1 \pmod{4}$, then $p \nmid S_{1+\frac{p-1}{k},k}(-1,p)$ for any $2\leq k\leq \frac{p-1}{2}$. In a similar way, Theorem \ref{thm4} implies that if $p \equiv 1 \pmod{2k}$ and $p \equiv 1 \pmod{4}$, then $$p\ |\ S_{3+\frac{p-1}{k},k}(-1,p)$$ if and only if $$p\ |\ 6k^3+(3k-1)(2k-1)(k-1).$$ For a fixed $k$ with $2 \leq k \leq \frac{p-1}{2}$, note that $6k^3+(3k-1)((2k-1)(k-1)$ is a non-zero integer, and hence there are finite number of primes $p$ such that $p\ |\  S_{3+\frac{p-1}{k},k}(-1,p).$ We now move our attention to the general determinant $S_{m+\frac{p-1}{k},k}(-1,p)$, and prove a similar result in the following theorem.
\begin{thm}\label{thm5}
Let $p$ be an odd prime and $k\in\mathbb{N}$ with $2 \leq k \leq \frac{p-1}{2}$. For any odd positive integer $m$, the set 
$$E_k(m) = \{ p : p \text{ is a prime with }2k\ |\ (p-1) \text{ and } p\ |\ S_{m+\frac{p-1}{k},k}(-1,p)\}$$
is finite.
\par Moreover, $E_k(1) = \phi$ and if $m \geq 3$ and $p > km+1$, then $p \in E_k(m)$ if and only if $p \equiv 1 \pmod{2k}$ and 
$$p\ |\ \Big((km)!_{(k)} + (km-1)!_{(k)}\Big)$$ or $$p\ |\ \Bigg(\frac{(km-kl)!_{(k)}}{(kl)!_{(k)}} + \frac{(km-kl-1)!_{(k)}}{(kl-1)!_{(k)}}\Bigg)$$ for $l = 1, \cdots, \frac{m-1}{2}$.  
\end{thm}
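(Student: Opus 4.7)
The plan is to expand $(\alpha_i - \alpha_j)^{m+N}$ (with $N := (p-1)/k$) by the binomial theorem and use $\alpha_i^N \equiv 1 \pmod{p}$ (since each $\alpha_i$ is a $k$-th power residue) to reduce each term to a monomial $\alpha_i^A \alpha_j^B$ with $0 \le A, B \le N-1$. Because $p > km+1$ forces $m + N < 2N$, for each residue $r \in \{0, \ldots, N-1\}$ at most two values of $s$ satisfy $s \equiv r \pmod{N}$: for $r \in \{0, \ldots, m\}$ both $s = r$ and $s = r + N$ contribute and collapse to the same monomial $\alpha_i^{m-r}\alpha_j^r$ with combined coefficient $(-1)^r\bigl[\binom{m+N}{r} + \binom{m+N}{r+N}\bigr]$ (using $(-1)^N = 1$ since $2k \mid p-1$), while for $r \in \{m+1, \ldots, N-1\}$ only $s = r$ contributes with coefficient $(-1)^r\binom{m+N}{r}$.

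This rewrites $S_{m+N, k}(-1, p) \equiv V C V^{\top} \pmod{p}$, where $V = (\alpha_i^a)_{1 \le i \le N,\, 0 \le a \le N-1}$ is Vandermonde with $(\det V)^2 \equiv T\left(\frac{p-1}{k}\right)^2$ a nonzero unit modulo $p$, and $C$ is an $N \times N$ matrix whose nonzero entries lie on two disjoint anti-diagonals $a + b = m$ and $a + b = m + N$, supported on the complementary index-sets $\{0, \ldots, m\}$ and $\{m+1, \ldots, N-1\}$ respectively. So $C$ is block anti-diagonal, and
\[
\det C \equiv \pm \prod_{r=0}^{m} \Big[\binom{m+N}{r} + \binom{m+N}{r+N}\Big] \cdot \prod_{r=m+1}^{N-1} \binom{m+N}{r} \pmod{p}.
\]
Since $m + N < p$, every $\binom{m+N}{r}$ is a unit modulo $p$, so the second product is nonzero. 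Hence $p \mid S_{m+N, k}(-1, p)$ if and only if $p \mid \binom{m+N}{l} + \binom{m+N}{m-l}$ for some $l \in \{0, 1, \ldots, m\}$, after using $\binom{m+N}{r+N} = \binom{m+N}{m-r}$. As $m$ is odd the indices pair up via $l \leftrightarrow m-l$, so it suffices to test $l \in \{0, 1, \ldots, (m-1)/2\}$.

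The main computational step, and the main potential obstacle, is translating this divisibility into the $e$-factorial expressions of the theorem. Using $N \equiv -k^{-1} \pmod{p}$, so that $k(m+N-j) \equiv k(m-j) - 1 \pmod{p}$, I would multiply numerator and denominator of the standard binomial formula by an appropriate power of $k$ to obtain
\[
\binom{m+N}{l} \equiv \frac{(km-1)!_{(k)}}{(km-kl-1)!_{(k)} \, (kl)!_{(k)}}, \quad \binom{m+N}{m-l} \equiv \frac{(km-1)!_{(k)}}{(km-kl)!_{(k)} \, (kl-1)!_{(k)}} \pmod{p}.
\]
Adding and clearing the common unit factor $(km-1)!_{(k)}$, the vanishing condition becomes $(km-kl)!_{(k)}(kl-1)!_{(k)} + (km-kl-1)!_{(k)}(kl)!_{(k)} \equiv 0 \pmod{p}$, which is $(kl)!_{(k)}(kl-1)!_{(k)}$ times the listed sum $\frac{(km-kl)!_{(k)}}{(kl)!_{(k)}} + \frac{(km-kl-1)!_{(k)}}{(kl-1)!_{(k)}}$, a unit factor since all the $e$-factorial values are positive integers less than $p$. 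The case $l = 0$ collapses to $1 + \binom{m+N}{m} \equiv \frac{(km)!_{(k)} + (km-1)!_{(k)}}{(km)!_{(k)}} \pmod{p}$, giving the first listed condition.

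Finiteness of $E_k(m)$ then follows at once: for $m \ge 3$, the characterization forces any $p \in E_k(m)$ with $p > km+1$ to divide one of the $(m+1)/2$ fixed nonzero integers in the statement, and only finitely many primes $p \le km+1$ remain to inspect. For $m = 1$, the first anti-diagonal block of $C$ is the $2 \times 2$ block with determinant $\equiv \left(\tfrac{2k-1}{k}\right)^2 \pmod{p}$; since $0 < 2k - 1 < p$ this is a nonzero unit, so $\det C \not\equiv 0 \pmod{p}$ and $E_k(1) = \phi$.
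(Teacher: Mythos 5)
Your argument is correct and is essentially the paper's own proof: the only structural difference is that where the paper routes the computation through Lemma \ref{thm0} (itself obtained by reducing $(\alpha_i/\alpha_j+d)^m$ modulo $x^{(p-1)/k}-1$ and invoking Krattenthaler's Lemma \ref{Krattenthaler1}), you re-derive the same product of folded binomial coefficients directly from a $VCV^{\top}$ factorization with $C$ supported on two anti-diagonals. Everything after that --- the observation that only the factors $\binom{m+N}{l}+\binom{m+N}{m-l}$ for $0\le l\le \frac{m-1}{2}$ can vanish modulo $p$, the translation via $N\equiv -k^{-1}\pmod{p}$ into the $e$-factorial expressions, the finiteness conclusion, and the $E_k(1)=\phi$ case --- matches the paper's computation step for step.
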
 
Putting $k=2$ in Theorem \ref{thm5}, we obtain the following immediate corollary.
\begin{cor}\label{conj3}
    Conjecture \ref{conj03} is true.
\end{cor}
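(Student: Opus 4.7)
My plan is to expand $(\alpha_i-\alpha_j)^{m+N}$, where $N=\tfrac{p-1}{k}$, by the binomial theorem and exploit $\alpha_i^{N}\equiv 1\pmod p$, which holds because $\alpha_1,\dots,\alpha_N$ form the subgroup of $k$-th power residues in $(\mathbb{Z}/p\mathbb{Z})^{\times}$. Collecting the $m+N+1$ binomial terms by the residue of the exponent modulo $N$, the $(i,j)$-entry becomes a bilinear form $\sum_{a,b}c_{a,b}\alpha_i^{a}\alpha_j^{b}$ in which $c_{a,b}$ is nonzero only when $a+b\equiv m\pmod N$ (with $0\le a,b\le N-1$). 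This gives the factorization
$$S_{m+N,k}(-1,p)\equiv\det(V\,C\,V^{T})\pmod p,\qquad V=[\alpha_i^{\,a}]_{1\le i\le N,\;0\le a<N},$$
and since $\det V=\pm T(\tfrac{p-1}{k})\not\equiv 0\pmod p$, we have $p\mid S_{m+N,k}(-1,p)$ if and only if $p\mid\det C$.

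The condition $a+b\equiv m\pmod N$ places exactly one nonzero entry in each row and column of $C$, with the entry in row $a$ lying in column $\pi(a):=(m-a)\bmod N$; since $N$ is even and $m$ is odd, the involution $\pi$ has no fixed points. Tracking the two values of $s\in\{b,b+N\}$ that reduce to the same $(a,b)$ when $b\le m$, one finds $c_{a,\pi(a)}$ equals $(-1)^{\pi(a)}\bigl[\binom{m+N}{\pi(a)}+\binom{m+N}{\pi(a)+N}\bigr]$ for $\pi(a)\le m$ and $(-1)^{\pi(a)}\binom{m+N}{\pi(a)}$ for $\pi(a)>m$. The hypothesis $p>km+1$ gives $m+N<p$, so every $\binom{m+N}{b}$ is a $p$-adic unit; the single-binomial factors therefore contribute no $p$, and $p\mid\det C$ if and only if $p$ divides
$$B_l:=\binom{m+N}{l}+\binom{m+N}{l+N}\qquad\text{for some }0\le l\le m.$$
The palindromic identity $\binom{m+N}{l+N}=\binom{m+N}{m-l}$ yields $B_l=B_{m-l}$, so (as $m$ is odd) it suffices to let $l$ range over $0,1,\dots,\tfrac{m-1}{2}$.

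The remaining step is to rewrite each $B_l$ in the $e$-factorial form of the statement. The key identity, obtained by multiplying each numerator factor of the binomial by $k$ and applying $kN\equiv-1\pmod p$, is
$$k^{b}\,b!\,\binom{m+N}{b}\equiv\prod_{j=0}^{b-1}(km-1-kj)\pmod p,$$
from which $\binom{m+N}{l}\equiv\frac{(km-1)!_{(k)}}{(kl)!_{(k)}\,(km-kl-1)!_{(k)}}$ and, applied to $\binom{m+N}{l+N}=\binom{m+N}{m-l}$, $\binom{m+N}{l+N}\equiv\frac{(km-1)!_{(k)}}{(km-kl)!_{(k)}\,(kl-1)!_{(k)}}\pmod p$. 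Clearing the common denominator (whose factors are all $\le km-1<p$, hence units) produces
$$B_l\equiv U_{l}\left(\frac{(km-kl)!_{(k)}}{(kl)!_{(k)}}+\frac{(km-kl-1)!_{(k)}}{(kl-1)!_{(k)}}\right)\pmod p$$
with $U_{l}$ a unit in $\mathbb{Z}/p\mathbb{Z}$; the conventions $0!_{(k)}=(-1)!_{(k)}=1$ handle $l=0$ and recover $(km)!_{(k)}+(km-1)!_{(k)}$. Each parenthesized expression is a fixed nonzero integer depending only on $k$, $m$, $l$, so only finitely many primes divide it; the union over $l$ shows $E_k(m)$ is finite. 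For $m=1$ only $l=0$ survives and the criterion becomes $p\mid 2k-1$, but $2k\mid p-1$ forces $p\ge 2k+1>2k-1$, so $E_k(1)=\emptyset$.

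The main obstacle will be the combinatorial bookkeeping in the second paragraph—correctly identifying the permutation structure of $C$, matching the pair of binomial summands for exponents $\le m$, and controlling the sign contributions (the permutation sign $(-1)^{N/2}$ and the product of $(-1)^{b}$'s, which fortunately cancel). Once this is done, the translation to $e$-factorials and the finiteness conclusion are routine.
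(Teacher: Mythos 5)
Your proof is correct and follows essentially the same route as the paper: the paper obtains this corollary by putting $k=2$ in Theorem \ref{thm5}, whose proof (via Lemma \ref{thm0}) likewise expands the entries binomially, reduces exponents modulo $N=\frac{p-1}{k}$, and factors the matrix through Vandermonde matrices --- there by pulling $\alpha_j^{m+N}$ out of each column and citing Krattenthaler's formula for $\det[P(X_iY_j)]$, where you use the equivalent $VCV^{T}$ factorization with $C$ a generalized permutation matrix --- arriving at the identical criterion that $p$ divide one of the sums $\binom{m+N}{l}+\binom{m+N}{m-l}$ for $0\le l\le\frac{m-1}{2}$, rewritten in $k$-step factorials via $kN\equiv-1\pmod{p}$, with the same disposal of $m=1$. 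The only cosmetic omission is that your finiteness conclusion is argued for $p>km+1$; the finitely many excluded primes are handled trivially, as the paper also notes.
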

We now give an application of Theorem \ref{thm5} to obtain the set $E_2(13).$ Note that
 \begin{align*}
     (2m)!!+(2m-1)!! & = 26!! + 25!! =  58917607974225 = 3^6 \cdot 5^2 \cdot 7 \cdot 11 \cdot 13 \cdot 109 \cdot 29629,
 \end{align*}
 which gives $109, 29629\in E_2(13)$ due to Theorem \ref{thm5}. Moreover, we have 
 \begin{equation*}
     \frac{(2m-2l)!!}{(2l)!!} + \frac{(2m-2l-1)!!}{(2l-1)!!} =
        \begin{cases}
            \frac{24!!}{2!!} + \frac{23!!}{1!!} 
            = 3^6 \cdot 5^2 \cdot 7 \cdot 11 \cdot 924397, &\ \text{if}\ l=1; \\
            \frac{22!!}{4!!} + \frac{21!!}{3!!}
            = 3^5 \cdot 5^2 \cdot 7 \cdot 11 \cdot 31643, &\ \text{if}\ l= 2;\\
         \frac{20!!}{6!!} + \frac{19!!}{5!!}= 3^4 \cdot 5 \cdot 7 \cdot 42703, &\ \text{if}\ l=3; \\
            \frac{18!!}{8!!} + \frac{17!!}{7!!}= 3^4 \cdot 5^2 \cdot 401, &\ \text{if}\ l=4; \\
            \frac{16!!}{10!!} + \frac{15!!}{9!!} = 3^3 \cdot 179, &\ \text{if}\ l=5; \\
            \frac{14!!}{12!!} + \frac{13!!}{11!!} = 3^3, &\ \text{if}\ l=6. \\
        \end{cases}
\end{equation*}
As a result, $401, 924397\in E_2(13)$. It is easy to calculate that $p=17$ is the only prime less than $27$ such that $p\ |\ S_{13+\frac{p-1}{2}},2(-1,p)$. Combining all these together, we obtain $E_2(13) = \{17,109,401,29629, 924397\}$.
\section{Preliminaries}
In this section, we recall certain basic results that shall be used to prove our results. We begin with the Quadratic Reciprocity Law.
\begin{thm}\cite[Theorem 9.9]{Burton1}
\label{Burton1}
    If $p$ and $q$ are distinct odd primes, then $$\left(\frac{p}{q}\right) \left(\frac{q}{p}\right) = (-1)^{\frac{p-1}{2}\cdot\frac{q-1}{2}}.$$
\end{thm}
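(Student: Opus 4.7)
The plan is to prove quadratic reciprocity via the classical Gauss's Lemma plus Eisenstein's lattice-point argument, which is the route used in Burton's textbook and fits the level of preliminaries the paper is recalling.

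First, I would establish Gauss's Lemma as a preliminary step: for an odd prime $p$ and $a$ coprime to $p$, if $\mu$ denotes the number of elements in $\{a,2a,\ldots,\tfrac{p-1}{2}a\}$ whose least positive residue mod $p$ exceeds $p/2$, then $\left(\frac{a}{p}\right) = (-1)^{\mu}$. The idea is that the absolute values of the least absolute residues of these multiples form a permutation of $1,2,\ldots,\tfrac{p-1}{2}$, so multiplying them gives $a^{(p-1)/2}\cdot\tfrac{p-1}{2}! \equiv (-1)^{\mu}\tfrac{p-1}{2}! \pmod{p}$, and Euler's criterion converts the left side to the Legendre symbol.

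Next, I would upgrade Gauss's Lemma to a counting formula valid for odd $a$: write each $ka$ as $p\lfloor ka/p\rfloor + r_k$, and by parity-tracking the positive and ``folded'' residues modulo $p$, conclude that $\mu \equiv \sum_{k=1}^{(p-1)/2}\lfloor ka/p\rfloor \pmod 2$. Denote this sum by $T(a,p)$; then $\left(\frac{a}{p}\right) = (-1)^{T(a,p)}$ whenever $a$ is odd and coprime to $p$. Applying this with $a = p$ (odd) modulo $q$ and with $a = q$ (odd) modulo $p$ gives
\begin{equation*}
\left(\frac{p}{q}\right)\left(\frac{q}{p}\right) = (-1)^{T(p,q)+T(q,p)}.
\end{equation*}

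The key geometric step is Eisenstein's observation. Consider the open rectangle $R$ with vertices $(0,0),(q/2,0),(q/2,p/2),(0,p/2)$, and count the lattice points with $1\leq x\leq\tfrac{q-1}{2}$ and $1\leq y\leq\tfrac{p-1}{2}$, giving $\tfrac{p-1}{2}\cdot\tfrac{q-1}{2}$ points in total. The diagonal $py = qx$ contains no such lattice point since $\gcd(p,q)=1$ and $x<q$. Points below the diagonal in the column $x=k$ satisfy $1\leq y<kp/q$, contributing $\lfloor kp/q\rfloor$ points; summing over $k$ yields $T(p,q)$. By the symmetric count in rows, the points above the diagonal total $T(q,p)$. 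Hence
\begin{equation*}
T(p,q)+T(q,p) = \frac{p-1}{2}\cdot\frac{q-1}{2},
\end{equation*}
and substituting into the displayed identity completes the proof.

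The main obstacle is the parity argument that passes from Gauss's Lemma (counting $\mu$) to the floor-function expression $T(a,p)$; this requires carefully separating each multiple $ka$ into quotient and residue parts and tracking parities modulo $2$ using the fact that $a$ is odd so that $\lfloor ka/p\rfloor$ and the corresponding residue contributions align correctly. Once that reduction is in place, the lattice-point bijection is transparent and the theorem follows immediately.
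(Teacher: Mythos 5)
The paper offers no proof of this statement: it is quoted verbatim as Theorem 9.9 of Burton's textbook and used as a black box. Your argument --- Gauss's Lemma, the parity reduction $\mu \equiv \sum_{k}\lfloor ka/p\rfloor \pmod 2$ for odd $a$, and Eisenstein's lattice-point count in the $\frac{q-1}{2}\times\frac{p-1}{2}$ rectangle --- is correct and is essentially the proof given in the cited reference itself, so nothing further is needed.
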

The following result on determinant is known from \cite{Krattenthaler1}.
\begin{lem}\cite[Lemma $10$]{Krattenthaler1}
\label{Krattenthaler1}
    Let $R$ be a commutative ring with identity and let $P(x) = \displaystyle\sum_{i=0}^{n-1}a_ix^i \in R[x].$ 
    Then 
    \begin{equation*}
        \text{det}[P(X_iY_j)]_{1 \leq i,j \leq n} = a_0a_1\cdots a_{n-1} \displaystyle\prod_{1 \leq i< j \leq n}(X_i - X_j)(Y_i - Y_j).
    \end{equation*}
\end{lem}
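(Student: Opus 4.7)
The plan is to exhibit the matrix $M=[P(X_iY_j)]_{1\le i,j\le n}$ as a product of three matrices whose determinants are easily computed, and then invoke multiplicativity of the determinant. Concretely, expanding the polynomial gives
\begin{equation*}
P(X_iY_j)=\sum_{\ell=0}^{n-1}a_\ell X_i^{\ell}Y_j^{\ell},
\end{equation*}
so if we set $A=[X_i^{\ell-1}]_{1\le i,\ell\le n}$, $D=\mathrm{diag}(a_0,a_1,\ldots,a_{n-1})$, and $B=[Y_j^{\ell-1}]_{1\le \ell,j\le n}$ (i.e.\ the transpose of the analogous Vandermonde-type matrix in the $Y$-variables), then a direct index check yields $M=A\,D\,B$ as $n\times n$ matrices over $R$.

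The next step is to take determinants, using $\det(M)=\det(A)\det(D)\det(B)$, which is valid over any commutative ring with identity. The middle factor is immediate: $\det(D)=a_0a_1\cdots a_{n-1}$. For the outer factors I would quote the classical Vandermonde identity, giving $\det(A)=\prod_{1\le i<j\le n}(X_j-X_i)$ and $\det(B)=\det(B^{T})=\prod_{1\le i<j\le n}(Y_j-Y_i)$.

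Finally, I would reconcile the sign convention of the stated formula by noting that for every pair $i<j$ the product $(X_i-X_j)(Y_i-Y_j)$ coincides with $(X_j-X_i)(Y_j-Y_i)$, since two minus signs cancel within each pair. Hence
\begin{equation*}
\prod_{1\le i<j\le n}(X_j-X_i)(Y_j-Y_i)=\prod_{1\le i<j\le n}(X_i-X_j)(Y_i-Y_j),
\end{equation*}
and multiplying the three determinants together reproduces the claimed expression.

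There is no substantial obstacle in this argument; the only point requiring a moment's care is verifying the factorization $M=ADB$ from the index conventions and confirming that the Vandermonde signs align with the statement, as addressed above. The argument is fully internal to commutative ring theory, so no assumption beyond $R$ being commutative with unity is used.
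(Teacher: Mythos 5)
Your proof is correct. The paper does not supply its own argument for this lemma---it is quoted directly from Krattenthaler \cite[Lemma 10]{Krattenthaler1}---but your factorization $M=ADB$ into a Vandermonde matrix, a diagonal matrix of the coefficients $a_0,\dots,a_{n-1}$, and a transposed Vandermonde matrix is precisely the standard proof given in that source, and every step (the index check $(ADB)_{ij}=\sum_{\ell=0}^{n-1}a_\ell X_i^{\ell}Y_j^{\ell}$, multiplicativity of the determinant over a commutative ring, the Vandermonde evaluation as a polynomial identity with integer coefficients hence valid over any commutative ring with unity, and the pairwise cancellation of signs) is sound.
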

In \cite{Sun1}, Sun evaluated the following Legendre symbol values of certain factorials.
\begin{lem}\cite[Lemma $2.3$]{Sun1}
\label{Sun1.2}
    Let $p \equiv 1 \pmod{4}$ be a prime, then
    \begin{equation*}
    \left( \frac{\frac{p-1}{2}!}{p}\right) = \left(\frac{2}{p}\right).
    \end{equation*}
\end{lem}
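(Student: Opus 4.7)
The plan is to evaluate $\left(\frac{\frac{p-1}{2}!}{p}\right)$ by determining the parity of the number $N$ of quadratic non-residues modulo $p$ lying in the interval $\{1,2,\ldots,\frac{p-1}{2}\}$, since the complete multiplicativity of the Legendre symbol gives
\begin{equation*}
\left(\frac{\frac{p-1}{2}!}{p}\right) \;=\; \prod_{k=1}^{\frac{p-1}{2}}\left(\frac{k}{p}\right) \;=\; (-1)^{N}.
\end{equation*}
Thus the task reduces to identifying $N \pmod{2}$.

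The key input is that $p \equiv 1 \pmod{4}$ forces $\left(\frac{-1}{p}\right) = 1$, so that $\left(\frac{k}{p}\right) = \left(\frac{p-k}{p}\right)$ for every $k$. The involution $k \mapsto p-k$ therefore sets up a bijection between the non-residues in $[1,\frac{p-1}{2}]$ and those in $[\frac{p+1}{2},p-1]$. Because the total number of non-residues in $[1,p-1]$ equals $\frac{p-1}{2}$, exactly half of them lie in each half of the interval, yielding $N = \frac{p-1}{4}$, and hence
\begin{equation*}
\left(\frac{\frac{p-1}{2}!}{p}\right) \;=\; (-1)^{\frac{p-1}{4}}.
\end{equation*}

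To finish, I would show that $(-1)^{\frac{p-1}{4}} = \left(\frac{2}{p}\right)$ by invoking the supplementary formula $\left(\frac{2}{p}\right) = (-1)^{(p^2-1)/8}$. Writing $p = 4m+1$, a direct calculation gives $(p^2-1)/8 = m(2m+1)$, which has the same parity as $m = \frac{p-1}{4}$ because $2m+1$ is odd. I do not anticipate any genuine obstacle; the whole argument rests on the symmetry-based count in the second paragraph, and that step is transparent once $\left(\frac{-1}{p}\right) = 1$ is in hand. An alternative route via Wilson's theorem would only show that $\left(\frac{p-1}{2}\right)!$ is a square root of $-1$ modulo $p$, which does not directly identify its Legendre symbol, so I would stick with the counting approach.
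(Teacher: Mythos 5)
Your proof is correct and complete. Note that the paper does not prove this lemma at all: it is quoted from Sun \cite{Sun1} as Lemma 2.3 and used as a black box, so there is no in-paper argument to compare against. Your route is the standard one and every step checks: writing $\left(\frac{\frac{p-1}{2}!}{p}\right)=(-1)^{N}$ with $N$ the number of non-residues in $(0,p/2)$ is immediate from multiplicativity; the condition $p\equiv 1\pmod 4$ gives $\left(\frac{-1}{p}\right)=1$, so $k\mapsto p-k$ pairs the non-residues of the two halves and forces $N=\frac{p-1}{4}$; and for $p=4m+1$ the identity $\frac{p^2-1}{8}=m(2m+1)\equiv m\pmod 2$ correctly converts $(-1)^{(p-1)/4}$ into $\left(\frac{2}{p}\right)$ via the supplementary law. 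Your closing remark is also apt: Wilson's theorem only yields $\left(\frac{p-1}{2}!\right)^2\equiv -1\pmod p$, which cannot distinguish the two square roots, so the counting argument is genuinely needed.
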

\begin{lem}\cite[Lemma $3.2$]{Sun1}
\label{Sun1.1}
    Let $p$ be an odd prime. If $ p \equiv 1 \pmod{4}$,\ then 
    \begin{equation*}
    \label{eq 3.1}
         \left(\frac{\frac{p-3}{2}!!}{p}\right) = (-1)^{|\{0<k<\frac{p}{4}:\left(\frac{k}{p}\right)=-1\}|}.
    \end{equation*}
\end{lem}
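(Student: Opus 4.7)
The plan is to reduce $\frac{p-3}{2}!!$ to the ordinary factorial $\left(\frac{p-1}{4}\right)!$ by splitting $\left(\frac{p-1}{2}\right)!$ into its odd and even parts, and then convert the resulting Legendre symbol into a count of non-residues. Since $p\equiv 1\pmod{4}$, the integer $\frac{p-1}{2}$ is even while $\frac{p-3}{2}$ is odd, so
\[
\frac{p-1}{2}!=\frac{p-3}{2}!!\cdot\frac{p-1}{2}!!\qquad\text{and}\qquad \frac{p-1}{2}!!=2^{(p-1)/4}\left(\frac{p-1}{4}\right)!.
\]
Feeding this into Lemma \ref{Sun1.2}, which asserts $\left(\frac{(p-1)/2!}{p}\right)=\left(\frac{2}{p}\right)$, produces
\[
\left(\frac{\frac{p-3}{2}!!}{p}\right)=\left(\frac{2}{p}\right)^{1+\frac{p-1}{4}}\left(\frac{\left(\frac{p-1}{4}\right)!}{p}\right).
\]

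Next I would dispose of the $\left(\frac{2}{p}\right)$-factor by a short case split modulo $8$. If $p\equiv 1\pmod{8}$, the supplement to quadratic reciprocity gives $\left(\frac{2}{p}\right)=1$ and the factor is trivially $1$. If $p\equiv 5\pmod{8}$, then $\left(\frac{2}{p}\right)=-1$, but in this case $\frac{p-1}{4}$ is odd, so the exponent $1+\frac{p-1}{4}$ is even and the factor is once more $1$. Consequently, in both residue classes,
\[
\left(\frac{\frac{p-3}{2}!!}{p}\right)=\left(\frac{\left(\frac{p-1}{4}\right)!}{p}\right)=\prod_{k=1}^{(p-1)/4}\left(\frac{k}{p}\right)=(-1)^{|\{1\le k\le (p-1)/4\,:\,(k/p)=-1\}|}.
\]

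To close, since $p$ is an odd prime, $p/4\notin\mathbb{Z}$, so the set $\{k\in\mathbb{Z}\,:\,1\le k\le \frac{p-1}{4}\}$ is exactly $\{k\in\mathbb{Z}\,:\,0<k<\frac{p}{4}\}$, yielding the stated formula. The only genuinely content-bearing step is the uniform vanishing of $\left(\frac{2}{p}\right)^{1+(p-1)/4}$; what makes this succeed is the pleasant alignment that the parity of $1+\frac{p-1}{4}$ flips precisely when $\left(\frac{2}{p}\right)$ changes sign, so their product is always $+1$ on primes $p\equiv 1\pmod 4$.
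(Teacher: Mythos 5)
Your argument is correct. Note that the paper does not prove this lemma at all --- it is imported verbatim from Sun's earlier work as \cite[Lemma 3.2]{Sun1} --- so there is no in-paper proof to compare against; what you have written is a complete, self-contained derivation from Lemma \ref{Sun1.2}, which the paper also quotes. Each step checks out: the splitting $\frac{p-1}{2}!=\frac{p-3}{2}!!\cdot\frac{p-1}{2}!!$ is valid because $\frac{p-1}{2}$ is even for $p\equiv 1\pmod 4$ (and is the same decomposition the authors use to derive \eqref{eq2}); the identity $\frac{p-1}{2}!!=2^{(p-1)/4}\left(\frac{p-1}{4}\right)!$ is exact; the cancellation of $\left(\frac{2}{p}\right)^{1+(p-1)/4}$ via the mod-$8$ case split is the one genuinely delicate point and you have verified it correctly in both classes; and the final identification of $\{1,\dots,\frac{p-1}{4}\}$ with $\{k: 0<k<\frac{p}{4}\}$ is immediate since $4\nmid p$. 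This multiplicative route through $\left(\frac{(\frac{p-1}{4})!}{p}\right)=\prod_{k<p/4}\left(\frac{k}{p}\right)$ is the standard and natural one, and it fits exactly the toolkit (Lemma \ref{Sun1.2} plus the double-factorial splitting) that the paper itself assembles around \eqref{eq2}.
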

We also restate a result of Sun from \cite{Sun3}.
\begin{lem}\cite[$(1.5)$]{Sun3}\label{Sun1.3} For an odd prime $p$, we have
\begin{equation*}
    \displaystyle\prod_{1 \leq i < j \leq \frac{p-1}{2}}(j^2-i^2) \equiv 
     \begin{cases}
      -\frac{p-1}{2}!\pmod{p}, & \text{if}\ p\equiv 1\pmod{4} \\  
       1 \pmod{p}, & \text{if}\ p\equiv 3\pmod{4}
    \end{cases}
\end{equation*}
\end{lem}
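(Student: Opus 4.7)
The plan is to express $N := \prod_{1 \le i < j \le n}(j^2 - i^2)$, with $n = (p-1)/2$, as an explicit product of factorials, reduce it modulo $p$ via Wilson's theorem and the pairing $k \leftrightarrow p-k$, and then split the conclusion according to the parity of $n$.

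First I would rewrite the product index-wise: factoring $j^2-i^2 = (j-i)(j+i)$ and evaluating the inner product over $j$ for each fixed $i$ gives $\prod_{j=i+1}^n(j-i) = (n-i)!$ and $\prod_{j=i+1}^n(j+i) = (n+i)!/(2i)!$, so
\begin{equation*}
N \;=\; \prod_{i=1}^n \frac{(n-i)!\,(n+i)!}{(2i)!}.
\end{equation*}
I would then invoke the elementary Wilson-type identity $k!\,(p-1-k)! \equiv (-1)^{k+1}\pmod{p}$. Applied with $k = n+i$ (so $p-1-k = n-i$), it yields $(n+i)!\,(n-i)! \equiv (-1)^{n+i+1}\pmod{p}$. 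Substituting into the formula for $N$ and collapsing the sign via $\sum_{i=1}^n (n+i+1) \equiv n(n+1)/2 \pmod{2}$, I obtain the compact congruence
\begin{equation*}
N \;\equiv\; \frac{(-1)^{n(n+1)/2}}{\prod_{i=1}^n (2i)!} \pmod{p}.
\end{equation*}

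The heart of the argument, and what I expect to be the main obstacle, is evaluating $\prod_{i=1}^n(2i)!$ modulo $p$ sharply enough to preserve the sign. My plan is to write $\prod_{i=1}^n(2i)! = \prod_{k=1}^{2n} k^{m_k}$, where $m_k = n + 1 - \lceil k/2\rceil$ records how often $k$ occurs, and then pair $k$ with $p-k$ for $k = 1, \ldots, n$. A short bookkeeping check gives $m_k + m_{p-k} = n+1$ and $m_{p-k} = \lceil k/2\rceil$; combined with $k(p-k) \equiv -k^2 \pmod{p}$, the pairing collapses the whole product to
\begin{equation*}
\prod_{i=1}^n (2i)! \;\equiv\; (-1)^{\sigma}\,(n!)^{n+1}\pmod{p}, \qquad \sigma \;=\; \sum_{k=1}^n \lceil k/2\rceil \;=\; \lfloor (n+1)^2/4\rfloor.
\end{equation*}
Tracking the sign exponents $n(n+1)/2$ and $\sigma$ correctly is the delicate part of the proof.

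To finish I would split by $p\pmod{4}$. When $p \equiv 1\pmod{4}$ (so $n = 2m$ is even), Wilson's theorem gives $(n!)^2 \equiv -1 \pmod{p}$, whence $(n!)^{n+1} \equiv (-1)^m\, n!$; a direct parity check shows $n(n+1)/2 - \sigma \equiv m\pmod{2}$, so the two $(-1)^m$ factors cancel and $N \equiv 1/n! \equiv -n! = -\frac{p-1}{2}! \pmod{p}$. When $p \equiv 3\pmod{4}$ (so $n$ is odd), $(n!)^2 \equiv 1$ forces $(n!)^{n+1} \equiv 1$, while $n(n+1)/2 - \sigma$ is even, giving $N \equiv 1 \pmod{p}$. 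Both conclusions match the stated formula.
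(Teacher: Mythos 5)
Your proof is correct, but note that the paper itself offers no proof of this lemma: it is imported verbatim from Sun's paper (cited as \cite[(1.5)]{Sun3}), so there is no internal argument to compare against. Your derivation is a legitimate self-contained proof. The key steps all check out: the factorization $N=\prod_{i=1}^{n}\frac{(n-i)!\,(n+i)!}{(2i)!}$ with $n=\frac{p-1}{2}$ is exact; the Wilson-type identity $k!\,(p-1-k)!\equiv(-1)^{k+1}\pmod p$ applied at $k=n+i$ correctly collapses the numerator to $(-1)^{3n(n+1)/2}=(-1)^{n(n+1)/2}$; and the multiplicity bookkeeping $m_k=n+1-\lceil k/2\rceil$, $m_{p-k}=\lceil k/2\rceil$, $m_k+m_{p-k}=n+1$ gives $\prod_{i=1}^n(2i)!\equiv(-1)^{\sigma}(n!)^{n+1}$ with $\sigma=\sum_{k=1}^n\lceil k/2\rceil$ exactly as you claim (your phrase ``$k(p-k)\equiv-k^2$'' slightly understates what happens, since the two exponents differ, but the formula you write down is the correct outcome of $k^{m_k}(p-k)^{m_{p-k}}\equiv(-1)^{m_{p-k}}k^{n+1}$). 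The final parity checks $n(n+1)/2-\sigma\equiv m\pmod 2$ for $n=2m$ and $\equiv 0$ for $n$ odd, together with $(n!)^2\equiv(-1)^{n+1}\pmod p$, land precisely on $-\frac{p-1}{2}!$ and $1$ respectively; I verified the result numerically for $p=5,7,13$. What your approach buys is independence from Sun's cited machinery: the whole lemma follows from Wilson's theorem and elementary index manipulation, which is arguably preferable to a bare citation in a self-contained exposition.
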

Finally, we deduce a congruence relation for $S_{m,k}(d,p)$ modulo $p$ using a method introduced by Ren and Sun \cite{Ren1}.
\begin{lem}\label{thm0}
    Let $p$ be an odd prime and $k\in\mathbb{N}$ with $2\leq k\leq \frac{p-1}{2}$ such that $p\equiv1$ $($mod $k)$. Suppose $m,d\in\mathbb{Z}$ such that $p\nmid d$, $\chi_k(d)=\pm 1$, and $m \in \{\frac{p-1}{k}+1, \frac{p-1}{k}+2, \cdots, \frac{2(p-1)}{k}-1\}$. If $p  > 2k+1$, then
    $$S_{m,k}(d,p) \equiv a^2_{m,k}(d,p)\ b_{m,k}(d,p) \pmod{p},$$ 
    where 
    \begin{align*}
        a_{m,k}(d,p)  = & \displaystyle\prod_{l=0}^{\lfloor \frac{(k(m-1)-p+1}{2k} \rfloor} \left[ \binom{m}{l} + \chi_k(d) \binom{m}{m-\frac{p-1}{k}-l}\right] \displaystyle\prod_{0 \leq l < \frac{p-1}{k}-1- \lfloor \frac{m}{2} \rfloor} \binom{m}{m-\frac{p-1}{k}+1+l} \\ & \times \displaystyle\prod_{1 \leq i < j \leq \frac{p-1}{k}}(\alpha_i - \alpha_j)
    \end{align*}
    and 
    \begin{equation*}
        b_{m,k}(d,p) = 
        \begin{cases}
            \displaystyle \chi_{2k}(d) (-1)^{\frac{p-1}{2k}-1}(1+\chi_k(d))\displaystyle\binom{m}{\frac{km-p+1}{2k}}\binom{m}{\frac{m}{2}}, & \text{if}\ 2\ |\ m \ \text{and}\ p \equiv 1 \pmod{2k}, \\
            \displaystyle(\chi_k(d))^{\frac{m}{2}} (-1)^{\frac{p-k-1}{2k}} \displaystyle\binom{m}{\frac{m}{2}}, & \text{if}\ 2\ |\ m \ \text{and}\ p \equiv k+1 \pmod{2k}, \\
            \displaystyle (\chi_{2k}(d))^m (-1)^{\frac{p-1}{2k}};   & \text{if}\ 2 \nmid m \ \text{and}\ p \equiv 1 \pmod{2k}, \\
            \displaystyle (-1)^{\frac{p-k-1}{2k}} (1+\chi_k(d)) \binom{m}{\frac{km-p+1}{2k}},  & \text{if}\ 2 \nmid m \ \text{and}\ p \equiv k+1 \pmod{2k}. 
        \end{cases}
    \end{equation*}
\end{lem}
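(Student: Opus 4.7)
The plan is to adapt to general $k$ the method Ren and Sun \cite{Ren1} used for $k=2$. Setting $N:=\frac{p-1}{k}$ and $m':=m-N\in[1,N-1]$, I first expand the matrix entries by the binomial theorem:
$$(\alpha_i+d\alpha_j)^m=\sum_{l=0}^{m}\binom{m}{l}d^{\,l}\alpha_i^{m-l}\alpha_j^{l}.$$
Because each $\alpha_i$ is a $k$-th residue, $\alpha_i^{N}\equiv 1\pmod p$, and because $\chi_k(d)=\pm 1$, $d^{N}\equiv\chi_k(d)\pmod p$. The condition $m\in[N+1,2N-1]$ forces each residue $r\in\{0,\dots,N-1\}$ of $m-l\pmod N$ to be attained by exactly two values $l\in\{m-r,m-r-N\}$ when $0\le r\le m'$, and by the single value $l=m-r$ when $m'+1\le r\le N-1$. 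Regrouping the binomial sum by $r$ yields, modulo $p$,
$$\bigl[(\alpha_i+d\alpha_j)^m\bigr]_{1\le i,j\le N}\equiv\widetilde U\cdot C,$$
where $\widetilde U_{i,r}=\alpha_i^{r}$ is Vandermonde and $C_{r,j}=e_r\,\alpha_j^{f(r)}$ for the bijection $f$ of $\{0,\dots,N-1\}$ that separately reverses $[0,m']$ and $[m'+1,N-1]$, with $e_r=d^{\,m'-r}\!\left[\binom{m}{m'-r}+\chi_k(d)\binom{m}{m-r}\right]$ on $[0,m']$ and $e_r=d^{\,m-r}\binom{m}{m-r}$ on $[m'+1,N-1]$.

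Factoring $C$ as (diagonal)$\cdot$(permutation)$\cdot$(Vandermonde) and taking determinants gives
$$S_{m,k}(d,p)\equiv\mathrm{sgn}(f)\cdot\Bigl(\prod_{r=0}^{N-1}e_r\Bigr)\cdot T\!\left(\tfrac{p-1}{k}\right)^{2}\pmod p,$$
since the two Vandermonde factors square into $T(\tfrac{p-1}{k})^{2}$. The proof therefore reduces to showing
$$\mathrm{sgn}(f)\cdot\prod_{r=0}^{N-1}e_r\;\equiv\;\frac{a_{m,k}(d,p)^{2}\, b_{m,k}(d,p)}{T(\tfrac{p-1}{k})^{2}}\pmod p.$$

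For this I exploit the symmetry $\binom{m}{s+N}=\binom{m}{m'-s}$ together with the elementary identity $(X+cY)(Y+cX)=c(X+cY)^{2}$ valid for $c\in\{\pm 1\}$, to fold $s\leftrightarrow m'-s$ in the paired product over $r\in[0,m']$ and $u\leftrightarrow m-u$ in the singleton product (after the substitution $u=m-r$ on $[m'+1,N-1]$). This transforms the generic factors into exactly the squared blocks $[\binom{m}{l}+\chi_k(d)\binom{m}{m-N-l}]^{2}$ and $\binom{m}{m-N+1+l}^{2}$ that make up $a_{m,k}(d,p)^{2}/T(\tfrac{p-1}{k})^{2}$, and it leaves unpaired central contributions $(1+\chi_k(d))\binom{m}{(km-p+1)/(2k)}$ when $m'$ is even, and $\binom{m}{m/2}$ when $m$ is even; these are precisely the binomial factors appearing in $b_{m,k}(d,p)$.

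The main obstacle is the sign and prefactor bookkeeping in the four cases dictated by the parities of $m$ and of $N$ (equivalently $p\bmod 2k\in\{1,k+1\}$, using $p\equiv1\pmod k$). The accumulated power of $d$ from the two products is $\sum_{r=0}^{m'}(m'-r)+\sum_{r=m'+1}^{N-1}(m-r)=N(N-1)/2$, which rewrites as $\chi_{2k}(d)^{N-1}$ when $N$ is even and $\chi_k(d)^{(N-1)/2}$ when $N$ is odd; each fold in the paired product contributes an extra $\chi_k(d)$, summing to $\chi_k(d)^{\lceil m'/2\rceil}$; and the permutation sign is $\mathrm{sgn}(f)=(-1)^{\binom{m'+1}{2}+\binom{N-m'-1}{2}}$. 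In each parity case one checks, using $m'\equiv m+N\pmod 2$, that the combination of these three contributions with the unpaired central binomials collapses to the piecewise expression for $b_{m,k}(d,p)$ stated in the lemma. The hypothesis $p>2k+1$ ensures $N\ge 3$, so that both the paired range $[0,m']$ and the singleton range $[m'+1,N-1]$ are nonempty and the pairing construction is well-defined.
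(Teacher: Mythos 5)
Your proposal is correct and follows essentially the same route as the paper: binomial expansion of the entries, reduction of exponents via $\alpha^{(p-1)/k}\equiv 1$ and $d^{(p-1)/k}\equiv\chi_k(d)\pmod p$, a Vandermonde-type factorization yielding $\mathrm{sgn}(f)\prod_r e_r\cdot T\left(\frac{p-1}{k}\right)^2$, and then the pairings $s\leftrightarrow m'-s$ and $u\leftrightarrow m-u$ (with the identity $(X+cY)(Y+cX)=c(X+cY)^2$ for $c=\pm1$) to split off the perfect square $a_{m,k}^2$ and leave the central factors constituting $b_{m,k}$. The only cosmetic difference is that the paper first extracts $\alpha_j^m$ from each column so the entries become $P(\alpha_i\alpha_j^{-1})$ and then cites Krattenthaler's determinant lemma (Lemma \ref{Krattenthaler1}), whereas you rederive that factorization explicitly as diagonal times permutation times Vandermonde; the coefficient and sign bookkeeping is the same in both treatments.
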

\begin{proof}
Let $g$ be a primitive root modulo $p$, then $\{g^{ki}\}_{i=1}^{\frac{p-1}{k}}$ is a permutation of all distinct $k$-th power residues modulo $p$. Therefore,
$$\displaystyle\prod_{i=1}^{\frac{p-1}{k}} \alpha_i \equiv \displaystyle\prod_{i = 1}^{\frac{p-1}{k}} g^{ki} = \{g^\frac{p-1}{2}\}^{\frac{p-1}{k}+1} \equiv  (-1)^{\frac{p-1}{k}+1}  \pmod{p}.$$
Using this, we have
    \begin{align}\label{eqs1}
        S_{m,k}(d,p) & = \displaystyle\prod_{j=1}^{\frac{p-1}{k}} \alpha_j^m \left|\left(\frac{\alpha_i}{\alpha_j}+d\right)^m\right|_{1 \leq i,j \leq \frac{p-1}{k}} \equiv  (-1)^{m(\frac{p-1}{k}+1)}  \left|\left(\frac{\alpha_i}{\alpha_j}+d\right)^m\right|_{1 \leq i,j \leq \frac{p-1}{k}} \pmod{p}.
    \end{align}
Note that
    \begin{align*}
       \left(\frac{\alpha_i}{\alpha_j}+d\right)^m 
       & = \displaystyle\sum_{l=0}^{m-\frac{p-1}{k}}  \binom{m}{l}d^{m-l}\left(\frac{\alpha_i}{\alpha_j}\right)^l + \displaystyle\sum_{m-\frac{p-1}{k}+1\leq l < \frac{p-1}{k}}\binom{m}{l}d^{m-l}\left(\frac{\alpha_i}{\alpha_j}\right)^l \\ & \hspace{3cm} +  \displaystyle\sum_{l = \frac{p-1}{k}}^{m}\binom{m}{l}d^{m-l}\left(\frac{\alpha_i}{\alpha_j}\right)^l \\ & \equiv \displaystyle\sum_{l=0}^{m-\frac{p-1}{k}} \Bigg[\binom{m}{l}+\binom{m}{\frac{p-1}{k}+l}d^{-\frac{p-1}{k}}\Bigg]d^{m-l}\left(\frac{\alpha_i}{\alpha_j}\right)^l \\ &  \hspace{3cm}+  \displaystyle\sum_{m-\frac{p-1}{k}+1\leq l < \frac{p-1}{k}}\binom{m}{l}d^{m-l}\left(\frac{\alpha_i}{\alpha_j}\right)^l \pmod{p} \\
       &\equiv  \displaystyle\sum_{l = 0}^{m-\frac{p-1}{k}} \Bigg[\binom{m}{l} + \binom{m}{m-\frac{p-1}{k}-l}d^{-\frac{p-1}{k}}\Bigg]d^{m-l}\left(\frac{\alpha_i}{\alpha_j}\right)^l  \\ &\hspace{2cm}+ \displaystyle\sum_{0 \leq l < \frac{2(p-1)}{k}-m-1} \binom{m}{l+m-\frac{p-1}{k}+1} d^{\frac{p-1}{k}-1-l}\left(\frac{\alpha_i}{\alpha_j}\right)^{l+m-\frac{p-1}{k}+1} \pmod{p}.
    \end{align*}
    We now use Lemma \ref{Krattenthaler1} to obtain
    \begin{align*}
       \left|\left(\frac{\alpha_i}{\alpha_j}+d\right)^m\right|_{1 \leq i,j \leq \frac{p-1}{k}}  \equiv &  \displaystyle\prod_{l=0}^{m-\frac{p-1}{k}}\Bigg[\binom{m}{l}+d^{-\frac{p-1}{k}}\binom{m}{m-\frac{p-1}{k}-l}\Bigg]d^{m-l}\\ & \displaystyle\prod_{0\leq l < \frac{2(p-1)}{k}-m-1}\binom{m}{l+m-\frac{p-1}{k}+1}d^{\frac{p-1}{k}-1-l} \displaystyle\prod_{1 \leq i < j \leq \frac{p-1}{k}} (\alpha_i-\alpha_j)\left(\frac{1}{\alpha_i}-\frac{1}{\alpha_j}\right)\pmod{p}.
        \end{align*}
       Since
    \begin{equation*}
        \displaystyle\prod_{1 \leq i < j \leq \frac{p-1}{k}}\alpha_i \alpha_j = \displaystyle\prod_{1 \leq j \leq \frac{p-1}{k}}\alpha_j^{\frac{p-1}{k}-1} \equiv (-1)^{\frac{p-1}{k}+1} \pmod{p},
    \end{equation*} we have 
    \begin{align*}
        \displaystyle\prod_{1 \leq i < j \leq \frac{p-1}{k}}\left(\alpha_i - \alpha_j\right)\left(\frac{1}{\alpha_i} - \frac{1}{\alpha_j}\right) &  = (-1)^{\binom{\frac{p-1}{k}}{2}} \displaystyle\prod_{1 \leq i < j \leq \frac{p-1}{k}} \frac{(\alpha_i-\alpha_j)^2}{\alpha_i\alpha_j}\\ & \equiv (-1)^{\lfloor \frac{p-k-1}{2k} \rfloor} \displaystyle\prod_{1 \leq i < j \leq \frac{p-1}{k}} (\alpha_i-\alpha_j)^2 \pmod{p}.
    \end{align*}
    As a result,
        \begin{align*}
        \left|\left(\frac{\alpha_i}{\alpha_j}+d\right)^m\right|_{1 \leq i,j \leq \frac{p-1}{k}}   \equiv & 
         (-1)^{\lfloor \frac{p-k-1}{2k} \rfloor} d^{\frac{\frac{p-1}{k}(2m-\frac{p-1}{k}+1)}{2}}\displaystyle\prod_{l=0}^{m-\frac{p-1}{k}}\Bigg[\binom{m}{l}+d^{-\frac{p-1}{k}}\binom{m}{m-\frac{p-1}{k}-l}\Bigg]\notag\\ &\displaystyle\prod_{0\leq l < \frac{2(p-1)}{k}-m-1}\binom{m}{l+m-\frac{p-1}{k}+1} \displaystyle\prod_{1 \leq i < j \leq \frac{p-1}{k}} (\alpha_i-\alpha_j)^2 \pmod{p}.
    \end{align*}
    Using this in \eqref{eqs1}, we obtain
     \begin{align}\label{eqs2}
        S_{m,k}(d,p) \equiv&  (-1)^{m(\frac{p-1}{k}+1)+\lfloor \frac{p-k-1}{2k} \rfloor} d^{\frac{\frac{p-1}{k}(2m-\frac{p-1}{k}+1)}{2}}\displaystyle\prod_{l=0}^{m-\frac{p-1}{k}}\Bigg[\binom{m}{l}+d^{-\frac{p-1}{k}}\binom{m}{m-\frac{p-1}{k}-l}\Bigg]\notag\\ & \displaystyle\prod_{0\leq l < \frac{2(p-1)}{k}-m-1}\binom{m}{l+m-\frac{p-1}{k}+1} \displaystyle\prod_{1 \leq i < j \leq \frac{p-1}{k}} (\alpha_i-\alpha_j)^2 \pmod{p}.
    \end{align}
It is easy to see that
    \begin{equation}\label{eqs3}
       \displaystyle\prod_{0\leq l < \frac{2(p-1)}{k}-m-1} \binom{m}{l+m-\frac{p-1}{k}+1} = 
        \begin{cases}
            \displaystyle\binom{m}{\frac{m}{2}} \displaystyle\prod_{0 \leq l < \frac{p-1}{k}-1-\frac{m}{2}}  \binom{m}{l+m-\frac{p-1}{k}+1}^2, & \text{if}\ 2\ |\ m; \\
            \displaystyle\prod_{0 \leq l < \frac{p-1}{k}-1-\frac{m-1}{2}}  \binom{m}{l+m-\frac{p-1}{k}+1}^2, & \text{if}\ 2\ \nmid m. \\
        \end{cases}
    \end{equation}
\textbf{Case I:} Let $m\equiv\frac{p-1}{k}$ $($mod $2)$. Noting that $\chi_k(d)=d^{\frac{p-1}{k}}\equiv\pm 1$ $($mod $p)$, we have   
    \begin{align}\label{eqs4}
        \displaystyle\prod_{l=0}^{m-\frac{p-1}{k}} \Bigg\{\binom{m}{l}&+ d^{-\frac{p-1}{k}} \binom{m}{m-\frac{p-1}{k}-l}\Bigg\}\notag\\  =&  (1+d^{-\frac{p-1}{k}})\binom{m}{\frac{m-\frac{p-1}{k}}{2}}\displaystyle\prod_{l=0}^{\frac{m-\frac{p-1}{k}}{2}-1}\Bigg[\binom{m}{l}+ d^{-\frac{p-1}{k}} \binom{m}{m-\frac{p-1}{k}-l}\Bigg] \notag\\ & \hspace{2cm}\displaystyle\prod_{l=\frac{m-\frac{p-1}{k}}{2}+1}^{m-\frac{p-1}{k}}\Bigg[\binom{m}{l}+ d^{-\frac{p-1}{k}} \binom{m}{m-\frac{p-1}{k}-l}\Bigg]  \notag\\  \equiv & (1+d^{-\frac{p-1}{k}})(d^{-\frac{p-1}{k}})^{\frac{km-p+1}{2k}}\binom{m}{\frac{m-\frac{p-1}{k}}{2}}\notag\\
       &\hspace{2cm}\displaystyle\prod_{l=0}^{\frac{m-\frac{p-1}{k}}{2}-1}\Bigg[\binom{m}{l}+ d^{-\frac{p-1}{k}} \binom{m}{m-\frac{p-1}{k}-l}\Bigg]^2 \pmod{p}.
    \end{align}
Using \eqref{eqs3} and \eqref{eqs4} in \eqref{eqs2}, and then using the facts that 
\begin{align*}
    d^{\frac{\frac{p-1}{k}(2m-\frac{p-1}{k}+1)}{2}}(d^{-\frac{p-1}{k}})^{\frac{km-p+1}{2k}} &\equiv  d^{\frac{(m+1)(p-1)}{2k}} \pmod{p}\\
    &\equiv 
        \begin{cases}
            \left(\chi_{2k}(d)\right)^{m+1} ~~\pmod{p}, & \text{if both $m,\frac{p-1}{k}$ are even;}\\
           \left(\chi_k(d)\right)^{\frac{m+1}{2}}~~\pmod{p}, & \text{if both $m,\frac{p-1}{k}$ are odd;}\\
        \end{cases}
    \end{align*}
    and  
    \begin{equation*}
        (-1)^{m(\frac{p-1}{k}+1) + \lfloor \frac{p-k-1}{2k} \rfloor} =
        \begin{cases}
         (-1)^{\frac{p-1}{2k}-1}; & \ \text{if both $m,\frac{p-1}{k}$ are even;} \\
        (-1)^{\frac{p-k-1}{2k}}; & \ \text{if both $m,\frac{p-1}{k}$ are odd;} \\
        \end{cases}
    \end{equation*}
    we obtain the desired result.\\\\
\textbf{Case II}: Let $m\not\equiv \frac{p-1}{k}$ $($mod $2)$. In this case, we have
    \begin{align}\label{eqs5}
         \displaystyle\prod_{l=0}^{m-\frac{p-1}{k}} \Bigg\{\binom{m}{l}&+ d^{-\frac{p-1}{k}} \binom{m}{m-\frac{p-1}{k}-l}\Bigg\}\notag \\ = &  \displaystyle\prod_{l=0}^{\frac{m-\frac{p-1}{k}-1}{2}}\Bigg[\binom{m}{l}+ d^{-\frac{p-1}{k}} \binom{m}{m-\frac{p-1}{k}-l}\Bigg]\notag\\
         &\hspace{2cm}\displaystyle\prod_{l=\frac{m-\frac{p-1}{k}+1}{2}}^{m-\frac{p-1}{k}}\Bigg[\binom{m}{l}+ d^{-\frac{p-1}{k}} \binom{m}{m-\frac{p-1}{k}-l}\Bigg] \notag\\  \equiv & (d^{-\frac{p-1}{k}})^{\frac{k(m+1)-p+1}{2k}} \displaystyle\prod_{l=0}^{\frac{m-\frac{p-1}{k}-1}{2}}\Bigg[\binom{m}{l}+ d^{-\frac{p-1}{k}} \binom{m}{m-\frac{p-1}{k}-l}\Bigg]^2 \pmod{p}.
    \end{align}
Putting \eqref{eqs3} and \eqref{eqs5} in \eqref{eqs2}, we use the facts 
    \begin{align*}
        d^{\frac{\frac{p-1}{k}(2m-\frac{p-1}{k}+1)}{2}}(d^{-\frac{p-1}{k}})^{\frac{k(m+1)-p+1}{2k}} &\equiv 
            d^{\frac{m(p-1)}{2k}} \pmod{p}\\
    &\equiv 
       \begin{cases}
           \left(\chi_k(d)\right)^{\frac{m}{2}}   ~~\pmod{p}, & \text{if $m$ is even and $\frac{p-1}{k}$ is odd;}\\
        \left(\chi_{2k}(d)\right)^{m}    ~~\pmod{p}, & \text{if $m$ is odd and $\frac{p-1}{k}$ is even;}\\
        \end{cases}
    \end{align*}
and
    \begin{equation*}
        (-1)^{m(\frac{p-1}{k}+1) + \lfloor \frac{p-k-1}{2k} \rfloor} =
        \begin{cases}
         (-1)^{\frac{p-k-1}{2k}}, & \ \text{if $m$ is even and $\frac{p-1}{k}$ is odd}; \\
         (-1)^{\frac{p-1}{2k}},  & \ \text{if $m$ is odd and $\frac{p-1}{k}$ is even}; \\
        \end{cases}
    \end{equation*}
    to complete the proof of the lemma.
\end{proof}
\section{Proof of the main results}  
In this section, we give proof of our main results. 
\begin{pf} \textbf{\ref{thm1}.}
We have $n \equiv \frac{p-1}{k} \pmod{2}$. Therefore, both $n$ and $\frac{p-1}{k}$ are even or odd. Since $\chi_k(d)=-1$, we must have from Lemma \ref{thm0} that $b_{n,k}(d,p)=0$, and hence we complete the proof of the theorem.
  \end{pf} 
 \begin{pf} \textbf{\ref{thm2}.}
From Lemma \ref{thm0}, we have 
    \begin{equation*}
        \left(\frac{S_{n,k}(d,p)}{p}\right) = \left(\frac{a_{n,k}(d,p)}{p}\right)^2\ \left(\frac{b_{n,k}(d,p)}{p}\right).
    \end{equation*}
Since $n$ is odd and $p\equiv1 \pmod{2k}$, Lemma \ref{thm0} provides
    \begin{equation*}
        \left(\frac{b_{n,k}(d,p)}{p}\right) = \left(\frac{d}{p}\right)^{\frac{p-1}{2k}}\ \left(\frac{\chi_k(d)}{p}\right)^{\frac{n-1}{2}}\ \left(\frac{-1}{p}\right)^{\frac{p-1}{2k}} = \left(\frac{-d}{p}\right)^{\frac{p-1}{2k}},
    \end{equation*}
    and hence 
    \begin{equation}\label{eqb1}
        \left(\frac{S_{n,k}(d,p)}{p}\right) = \left(\frac{a_{n,k}(d,p)}{p}\right)^2\left(\frac{-d}{p}\right)^{\frac{p-1}{2k}}.
    \end{equation}
 $(a)$ Let $k$ be even. Since $$\chi_k(d) \equiv d^{\frac{p-1}{k}} \equiv 1 \pmod{p},$$ we have $$d^{\frac{p-1}{2}} = (d^{\frac{p-1}{k}})^{\frac{k}{2}} \equiv 1 \pmod{p}.$$ As a result, $$\left(-\frac{d}{p}\right)^{\frac{p-1}{2k}} = 1,$$ and hence \eqref{eqb1} yields $$\left(\frac{S_{n,k}(d,p)}{p}\right) = \left(\frac{a_{n,k}(d,p)}{p}\right)^2 = \begin{cases}
         1, & \ \text{if $a_{n,k}(d,p)\neq0$}; \\
         0,  & \ \text{if $a_{n,k}(d,p)=0$}. \\
        \end{cases}$$
        Thus we obtain the desired result.\\\\
 $(b)$  If $k$ is odd, then it is easy to see that
    \begin{equation*}
         \left(-\frac{d}{p}\right)^{\frac{p-1}{2k}} = 
        \begin{cases}
            1, & \text{if}\ p \equiv 1 \pmod{4k} \\
            -\left(\frac{d}{p}\right), & \text{if}\ p \equiv 2k+1 \pmod{4k}.
        \end{cases}
    \end{equation*}
   Using this in \eqref{eqb1} , and then noting that
    $$\left(\frac{S_{n,k}(d,p)}{p}\right) = 0$$ if and only if $$\left(\frac{a_{n,k}(d,p)}{p}\right) = 0,$$
    we complete the proof of the theorem.
    \end{pf} 
\begin{pf} \textbf{\ref{thm3}.} We first note that $p\equiv1 \pmod{4}$ in each case, and hence $\left(\frac{-1}{p}\right)=1$.
If $p = 2k+1,$ then  
\begin{equation}\label{eq6}
   S_{1+\frac{p-1}{k},k}(-1,p) = S_{3,k}(-1,p) =  \begin{vmatrix}
     0 & (\alpha_1 - \alpha_2)^3 \\ 
     (\alpha_2 - \alpha_1)^3 & 0\\
\end{vmatrix}= (\alpha_1 - \alpha_2)^6.
\end{equation}
As a result,
\begin{equation*}
    \left(\frac{\sqrt{S_{1+\frac{p-1}{k},k}(-1,p)}}{p}\right) =\left(\frac{\pm(\alpha_1 - \alpha_2)^3}{p}\right)= \left(\frac{\alpha_1 - \alpha_2}{p}\right) = \left(\frac{T(\frac{p-1}{k})}{p}\right).
\end{equation*}
Suppose $p > 2k+1$ such that $p\equiv1$ $($mod $2k)$. Note that $\frac{p-1}{k}$ is even and $\frac{p-1}{k}+1$ is odd, and hence Lemma \ref{thm0} provides
 $$S_{\frac{p-1}{k}+1,k}(-1,p)  \equiv  a_{\frac{p-1}{k}+1,k}^2(-1,p) b_{\frac{p-1}{k}+1,k}(-1,p) \pmod{p},$$ where
\begin{align*}
    a_{\frac{p-1}{k}+1,k}(-1,p) & = \left(\frac{p-1}{k}+2\right)\displaystyle\prod_{l=0}^{\frac{p-1}{2k}-2}\binom{\frac{p-1}{k}+1}{2+l}  \displaystyle\prod_{1 \leq i < j \leq \frac{p-1}{k}}(\alpha_i - \alpha_j)
\end{align*}
and 
\begin{equation*}
    b_{\frac{p-1}{k}+1,k}(-1,p) = 1.
\end{equation*} 
Therefore, 
\begin{align*}
  \sqrt{S_{\frac{p-1}{k}+1,k}(-1,p)} \equiv a_{\frac{p-1}{k}+1,k}(-1,p) \pmod{p}\ \text{or}\ -a_{\frac{p-1}{k}+1,k}(-1,p) \pmod{p}.
\end{align*}
Since $\left(\frac{-1}{p}\right)=1$, we must have 
\begin{equation}\label{eqs1k}
    \left(\frac{ \sqrt{S_{\frac{p-1}{k}+1,k}(-1,p)}}{p}\right) 
    = \left(\frac{\left(\frac{p-1}{k}+2\right)\displaystyle\prod_{l=0}^{\frac{p-1}{2k}-2}\binom{\frac{p-1}{k}+1}{2+l}}{p}\right)\left(\frac{\displaystyle\prod_{1 \leq i < j \leq \frac{p-1}{k}}(\alpha_i - \alpha_j)}{p}\right).
\end{equation}
Note that
\begin{equation}\label{eq4}
     \displaystyle\prod_{l=0}^{\frac{p-1}{2k}-2} \binom{\frac{p-1}{k}+1}{2+l} = \left(\frac{p-1}{k}+1\right)^{\frac{p-1}{2k}-1} \displaystyle\prod_{r=1}^{\frac{p-1}{2k}-1}\frac{\left(\frac{p-1}{k}-r+1\right)^{\frac{p-1}{2k}-r}}{\left(\frac{p-1}{2k}-r+1\right)^{r}} ,
\end{equation}
\textbf{Case I:} Let $p \equiv 1 \pmod{4k}$, then \eqref{eq4} yields
\begin{align*}
  \left(\frac{\displaystyle\prod_{l=0}^{\frac{p-1}{2k}-2} \binom{\frac{p-1}{k}+1}{2+l}}{p}\right) = \left(\frac{\left(\frac{p-1}{k}\right)!!}{p}\right) \left(\frac{\frac{p-1}{k}+1}{p}\right).
\end{align*}
Using this in \eqref{eqs1k}, we obtain
\begin{align*}
  \left(\frac{\sqrt{S_{\frac{p-1}{k}+1,k}(-1,p)}}{p}\right) & = \left(\frac{(\frac{p-1}{k}+1)(\frac{p-1}{k}+2)}{p}\right) \left(\frac{\left(\frac{p-1}{k}\right)!!}{p}\right) \left(\frac{\displaystyle\prod_{1 \leq i < j \leq \frac{p-1}{k}}(\alpha_i - \alpha_j)}{p}\right).
\end{align*}
Hence the result follows.\\\\
\textbf{Case II:}
    Let $p \equiv 2k+1 \pmod{4k}$. In this case, we have from \eqref{eq4} that
    \begin{align*}
       \left(\frac{\displaystyle\prod_{l=0}^{\frac{p-1}{2k}-2} \binom{\frac{p-1}{k}+1}{2+l}}{p}\right) = \left(\frac{(\frac{p-1}{k}-1)!!}{p}\right).
    \end{align*}
    Using this in \eqref{eqs1k}, we obtain
    \begin{align*}
        \left(\frac{\sqrt{S_{\frac{p-1}{k}+1,k}(-1,p)}}{p}\right) & = \left(\frac{(\frac{p-1}{k}+2)}{p}\right) \left(\frac{(\frac{p-1}{k}-1)!!}{p}\right) \left(\frac{\displaystyle\prod_{1 \leq i < j \leq \frac{p-1}{k}}(\alpha_i - \alpha_j)}{p}\right),
    \end{align*}
    completing proof of the theorem.
\end{pf}
\begin{pf} \textbf{\ref{thm4}.} It is easy to see that $p\equiv1 \pmod{4}$ in each case, and hence $\left(\frac{-1}{p}\right)=1$.
If $p = 2k+1,$ then 
\begin{equation*}
    S_{5,k}(-1,p) =  \begin{vmatrix}
     0 & (\alpha_1 - \alpha_2)^5 \\ 
     (\alpha_2 - \alpha_1)^5 & 0\\
\end{vmatrix}= (\alpha_1 - \alpha_2)^{10},
\end{equation*}
and hence
\begin{equation*}
    \left(\frac{\sqrt{S_{5,k}(-1,p)}}{p}\right) = \left(\frac{\pm(\alpha_1 - \alpha_2)^5}{p}\right)=\left(\frac{\alpha_1 - \alpha_2}{p}\right).
\end{equation*}
Suppose $p >2k+1$ such that $p \equiv 1 \pmod{2k}$. Noting that $\frac{p-1}{k}$ is even and $\frac{p-1}{k}+3$ is odd, we have from Lemma \ref{thm0} that $$S_{\frac{p-1}{k}+3,k}(-1,p)  \equiv  a_{\frac{p-1}{k}+3,k}^2(-1,p)\ b_{\frac{p-1}{k}+3,k}(-1,p) \pmod{p},$$ where
\begin{align*}
    a_{\frac{p-1}{k}+3,k}(-1,p) & =\displaystyle\prod_{l = 0}^{1} \Bigg[\binom{\frac{p-1}{k}+3}{l} + \binom{\frac{p-1}{k}+3}{3-l}\Bigg] 
    \displaystyle\prod_{0 \leq l < \frac{p-1}{2k}-2}\binom{\frac{p-1}{k}+3}{4+l}\displaystyle\prod_{1 \leq i < j \leq \frac{p-1}{k}}(\alpha_i - \alpha_j) 
\end{align*}
and 
\begin{equation*}
    b_{\frac{p-1}{k}+3,k}(-1,p) = 1.
\end{equation*} 
As a result, we have 
\begin{align*}
   \sqrt{S_{\frac{p-1}{k}+3,k}(-1,p)} \equiv a_{\frac{p-1}{k}+3,k}(-1,p) \ \text{or}\ -a_{\frac{p-1}{k}+3,k}(-1,p) \pmod{p}.
\end{align*}
Since $\left(\frac{-1}{p}\right)=1$, we must have 
\begin{equation*}
    \left(\frac{ \sqrt{S_{\frac{p-1}{k}+3,k}(-1,p)}}{p}\right) = \left(\frac{a_{\frac{p-1}{k}+3,k}(-1,p)}{p}\right).
\end{equation*}
Noting that $$\displaystyle\prod_{l = 0}^{1} \Bigg[\binom{\frac{p-1}{k}+3}{l} + \binom{\frac{p-1}{k}+3}{3-l}\Bigg] \equiv \Bigg[\frac{6k^3+(3k-1)(2k-1)(k-1)}{6k^3}\Bigg]\Bigg[\frac{(3k-1)(4k-1)}{2k^2}\Bigg] \ \pmod{p},$$ we have
\begin{align}\label{eqs3k2}
\left(\frac{ \sqrt{S_{\frac{p-1}{k}+3,k}(-1,p)}}{p}\right)  =  & \left(\frac{3k(3k-1)(4k-1)\{6k^3+(3k-1)(2k-1)(k-1)\}}{p}\right)\notag\\ & \left(\frac{ \displaystyle\prod_{0 \leq l < \frac{p-1}{2k}-2}\binom{\frac{p-1}{k}+3}{4+l}}{p}\right) \left(\frac{\displaystyle\prod_{1 \leq i < j \leq \frac{p-1}{k}}(\alpha_i - \alpha_j)}{p}\right) 
\end{align}
\textbf{Case I:} Let $p\equiv1 \pmod{4k}$. If $p = 4k+1,$ then $$\displaystyle\prod_{0 \leq l < \frac{p-1}{2k}-2}\binom{\frac{p-1}{k}+3}{4+l}=1,$$
and hence the result follows from \eqref{eqs3k2}.
\par On the other hand, if $p>4k+1$, then 
\begin{align*}
     \displaystyle\prod_{l=0}^{\frac{p-1}{2k}-3} \binom{\frac{p-1}{k}+3}{4+l} =& \left(\frac{(\frac{p-1}{k}+3)(\frac{p-1}{k}+2)(\frac{p-1}{k}+1)\frac{p-1}{k}}{4\cdot3\cdot2\cdot1}\right)^{\frac{p-1}{2k}-2} \displaystyle\prod_{r=1}^{\frac{p-1}{2k}-3}\frac{\left(\frac{p-1}{k}-r\right)^{\frac{p-1}{2k}-r-2}}{\left(\frac{p-1}{2k}-r+2\right)^{r}},
\end{align*}
and hence
\begin{align*}
  \left(\frac{\displaystyle\prod_{l=0}^{\frac{p-1}{2k}-3} \binom{\frac{p-1}{k}+3}{4+l}}{p}\right) = \left(\frac{(\frac{p-1}{k}-1)!!}{p}\right) \left(\frac{3}{p}\right).
\end{align*}
Thus we obtain the desired result due to \eqref{eqs3k2}.\\\\
\textbf{Case II:} Let $p \equiv 2k+1 \pmod{4k}$. In this case, we have
\begin{align*}
     \displaystyle\prod_{l=0}^{\frac{p-1}{2k}-3} \binom{\frac{p-1}{k}+3}{4+l} =& \left(\frac{(\frac{p-1}{k}+3)(\frac{p-1}{k}+2)(\frac{p-1}{k}+1)\frac{p-1}{k}}{4\cdot3\cdot2\cdot1}\right)^{\frac{p-1}{2k}-2} \displaystyle\prod_{r=1}^{\frac{p-1}{2k}-3}\frac{\left(\frac{p-1}{k}-r\right)^{\frac{p-1}{2k}-r-2}}{\left(\frac{p-1}{2k}-r+2\right)^{r}},
\end{align*}
 and hence
    \begin{align*}
       \left(\frac{\displaystyle\prod_{l=0}^{\frac{p-1}{2k}-3} \binom{\frac{p-1}{k}+3}{4+l}}{p}\right) & = \left(\frac{\frac{p-1}{k}!!}{p}\right)\left(\frac{3}{p}\right)\left(\frac{(\frac{p-1}{k}+3)(\frac{p-1}{k}+2)(\frac{p-1}{k}+1)}{p}\right) \\ & =  \left(\frac{\frac{p-1}{k}!!}{p}\right)\left(\frac{3}{p}\right)\left(\frac{k(k-1)(2k-1)(3k-1)}{p}\right).
    \end{align*}
Using this in \eqref{eqs3k2}, we complete the proof of the theorem.
\end{pf}  
\begin{pf} \textbf{\ref{thm5}.} We first give a proof of the theorem for $m = 1$. If $p = 2k+1,$ then \eqref{eq6} implies that
$$p \nmid S_{1+\frac{p-1}{k},k}(-1,p),$$ and hence $E_k(1)=\phi$.
\par On the other hand, if $p > 2k+1$, then Lemma \ref{thm0} yields
\begin{equation*}
    S_{1+\frac{p-1}{k},k}(-1,p) \equiv a_{1+\frac{p-1}{k},k}^2(-1,p)\  b_{1+\frac{p-1}{k},k}(-1,p) \pmod{p},
\end{equation*}
where 
\begin{equation*}
     a_{1+\frac{p-1}{k},k}(-1,p) \equiv \left(\frac{2k-1}{k}\right) \ \displaystyle\prod_{l=0}^{\frac{p-1}{2k}-2}\binom{\frac{p-1}{k}+1}{2+l}  \displaystyle\prod_{1 \leq i < j \leq \frac{p-1}{k}}(\alpha_i - \alpha_j) \pmod{p}
\end{equation*}
and 
\begin{equation*}
    b_{m+\frac{p-1}{k},k}(-1,p) = 1.
\end{equation*}
It is easy to see that $\frac{p-1}{k}< 1+\frac{p-1}{k} < \frac{2(p-1)}{k}$. Therefore, $$p \nmid \displaystyle\prod_{l=0}^{\frac{p-1}{2k}-2}\binom{\frac{p-1}{k}+1}{2+l}.$$ As a result, $$p \nmid a_{m+\frac{p-1}{k},k}(-1,p),$$ and hence $$p \ \nmid \  S_{1+\frac{p-1}{k},k}(-1,p),$$ concluding that $E_k(1)=\phi$.
 \par We now assume that $m \geq 3$ is odd and  $p > km+1.$ Note that $\frac{p-1}{k} < m+\frac{p-1}{k} < \frac{2(p-1)}{k}$ and $\frac{p-1}{k}$ is even, and hence Lemma \ref{thm0} provides 
 $$S_{m+\frac{p-1}{k},k}(-1,p)  \equiv  a_{m+\frac{p-1}{k},k}^2(-1,p)\ b_{m+\frac{p-1}{k},k}(-1,p) \pmod{p},$$ where
\begin{align*}
    a_{m+\frac{p-1}{k},k}(-1,p) & = \displaystyle\prod_{l=0}^{\frac{m-1}{2}} \Bigg[\binom{m+\frac{p-1}{k}}{l} + \binom{m+\frac{p-1}{k}}{m-l}\Bigg]  \displaystyle\prod_{0 \leq l < \frac{p-1}{2k}- \frac{m+1}{2}} \binom{m+\frac{p-1}{k}}{m+1+l} \displaystyle\prod_{1 \leq i < j \leq \frac{p-1}{k}}(\alpha_i - \alpha_j) 
\end{align*}
and
\begin{equation*}
    b_{m+\frac{p-1}{k},k}(-1,p) = 1.
\end{equation*}
It is easy to see that
$$p \nmid \displaystyle\prod_{0 \leq l < \frac{p-1}{2k}- \frac{m+1}{2}} \binom{m+\frac{p-1}{k}}{m+1+l} \displaystyle\prod_{1 \leq i < j \leq \frac{p-1}{k}}(\alpha_i - \alpha_j). $$ 
Therefore, we must have that
$$p\ |\ S_{m+\frac{p-1}{k},k}(-1,p)$$ if and only if 
\begin{align}\label{eqif1}
p\ |\ \displaystyle\prod_{l=0}^{\frac{m-1}{2}} \Bigg[\binom{m+\frac{p-1}{k}}{l} + \binom{m+\frac{p-1}{k}}{m-l}\Bigg].
\end{align}
For $l = 0$, note that 
\begin{align}\label{eqif2}
\binom{m+\frac{p-1}{k}}{l} + \binom{m+\frac{p-1}{k}}{m-l} = 1 + \binom{m+\frac{p-1}{k}}{m} \equiv \frac{(km)!_{(k)}+(km-1)!_{(k)}}{(km)!_{(k)}} \pmod{p}.
\end{align}
Again, for $l=1, 2, \ldots, \frac{m-1}{2}$, we have
\begin{align}\label{eqif3}
    \binom{m+\frac{p-1}{k}}{l} + \binom{m+\frac{p-1}{k}}{m-l}   = & \frac{\left(m+\frac{p-1}{k}\right)\left(m+\frac{p-1}{k}-1\right) \cdots \left(m+\frac{p-1}{k}-(l-1)\right)}{l!}\notag \\
    &\hspace{1cm}+ \frac{\left(m+\frac{p-1}{k}\right) \left(m+\frac{p-1}{k}-1\right)\cdots \left(\frac{p-1}{k}+l+1\right)}{(m-l)!} \notag\\ \equiv & \frac{(km-1)(k(m-1)-1) \cdots (k(m-l+1)-1)}{k^l \cdot l!} \notag\\
    &\hspace{1cm}+ \frac{(km-1)(k(m-1)-1) \cdots (k(l+1)-1)}{k^{m-l}\cdot (m-l)!} \pmod{p}\notag \\      \equiv& \frac{(km-1) \cdots (k(m-l+1)-1)}{(km-kl)!_{(k)}}\notag\\&\hspace{2cm}\Bigg[\frac{(km-kl)!_{(k)}}{(kl)!_{(k)}} + \frac{(km-kl-1)!_{(k)}}{(kl-1)!_{(k)}}\Bigg] \pmod{p}.
\end{align}
Noting that $$p \nmid (km-kl)!_{(k)}$$ for $l= 0,1, \cdots, \frac{m-1}{2}$ and $$p \nmid (km-1) \cdots (k(m-l+1)-1)$$ for $l =1,2,\cdots, \frac{m-1}{2},$ we complete the proof the second part of the theorem because of \eqref{eqif1}, \eqref{eqif2}, and \eqref{eqif3}.
\par For any fixed $m$ and $k$, it is obvious that there are always finite number of primes $p\leq km+1$ such that $$p\mid S_{m+\frac{p-1}{k},k}(-1,p).$$ If $p>mk+1$, then we have already deduced that  
$$p \ |\ S_{m+\frac{p-1}{k},k}(-1,p)$$ if and only if $$p\ |\ \{(km)!_{(k)}+(km-1)!_{(k)}\}\displaystyle\prod_{l=1}^{\frac{m-1}{2}} \Bigg[\frac{(km-kl)!_{(k)}}{(kl)!_{(k)}} + \frac{(km-kl-1)!_{(k)}}{(kl-1)!_{(k)}}\Bigg].$$ Clearly, $$\{(km)!_{(k)}+(km-1)!_{(k)}\}\displaystyle\prod_{l=1}^{\frac{m-1}{2}} \Bigg[\frac{(km-kl)!_{(k)}}{(kl)!_{(k)}} + \frac{(km-kl-1)!_{(k)}}{(kl-1)!_{(k)}}\Bigg]$$ is a non-zero integer that is independent of $p$, hence it must have finite number of prime divisors. 
Hence the result follows.
 \end{pf}
\section{Conflicts of interest/Competing interests} The authors do not have any conflict of interest.
\section{Data availability}
Data sharing is not applicable to this article as no datasets were generated or analysed during the current study.
\section{Acknowledgement:} The first author acknowledges the support received from Council of Scientific \& Industrial Research (CSIR), India, through a Fellowship 09/1312(15921)/2022-EMR-I. The second author is supported by a project (CRG/2023/000482) of SERB, Department of Science and Technology, Goverment of India, under Core Research Grant.  

\end{document}